\documentclass[12pt]{article} \setlength{\topmargin}{-.6in}
\setlength{\oddsidemargin}{0.125in} \setlength{\textheight}{8.5in}
\setlength{\textwidth}{6.25in} \setlength{\oddsidemargin}{0in}
\setlength{\textheight}{9.00in} \setlength{\textwidth}{6.5in}
\usepackage{subfig}
\usepackage{tikz}
\usepackage{graphicx}
\usepackage{mathrsfs}
\usetikzlibrary{positioning}
\usepackage{epstopdf} 
\usepackage{authblk}
\newcommand{\be}{\begin{equation}}
\newcommand{\ee}{\end{equation}}
\newcommand{\ba}{\begin{array}}
\newcommand{\ea}{\end{array}}

\newcommand{\bea}{\begin{eqnarray*}}
\newcommand{\eea}{\end{eqnarray*}}
\newcommand{\bean}{\begin{eqnarray}}
\newcommand{\eean}{\end{eqnarray}}

\newtheorem{lemma}{Lemma}[section]

\newtheorem{definition}{Definition}[section]
\newtheorem{theorem}{Theorem}[section]

\newtheorem{proposition}{Proposition}[section]
\usepackage{amsmath, amssymb}
\usepackage{latexsym}

\def\Box{\leavevmode\vbox{\hrule
     \hbox{\vrule\kern5pt\vbox{\kern5pt}%
           \vrule}\hrule}}

\numberwithin{equation}{section}
\bibliographystyle{fr-plain}
\usepackage{float}
\usepackage{caption}
 \DeclareCaptionFormat{sanslabel}{#3}%
\newcommand*{\email}[1]{{E-mail:  #1 }}
\author[1]{Meniar {$Haddad$}\thanks{\email{\texttt{Corresponding author: Meniar.Haddad@fst.rnu.tn}}}}
\author[2]{Wafa {Djobbi}\thanks{\email{\texttt{wafadjobbi100@gmail.tn}}}}

\title{On The Babenko-Bechner-Type Inequality   associated  with the Weinstein  Operator}

\begin{document}
\affil[1]{ University of carthage, Faculty of sciences of Bizerte, 7021 Zarzouna, Departement of Mathematics. University of Tunis El Manar, Faculty of sciences of Tunis, Research Laboratories of Mathematics Analysis and Applications LR11ES11 Tunisia.}
\maketitle
\begin{abstract}
In this paper, we study the Babenko-Bechner-type inequality for the Fourier Weinstein transform $\mathcal{F} _{w}^{ \alpha ,d}$ associated with the Weinstein operator $ \Delta_{w}^{\alpha,d}$. We use this inequality to establish a new version of Young's type inequality.
\end{abstract}
\noindent{\bf keywords.}{  Weinstein transform, Weinstein operator, Babenko inequality, Young's type inequality}\\
  \noindent{ \bf 2010 AMS subject classifications.} .42B35, 43A32, 44A20, 44A35, 28A33, 28A35 .

\section{Introduction}
Inequalities are a basic tool in the study of the classical Fourier analysis. The result relating $L^{p}$ estimates for a function and its Fourier transform is the Hausdorff-Young inequality (see\cite{B.a}) it given for $ f \in L^{p}(\mathbb{R}^{n})$, $1 < p \leq 2$ and $q$ such that $ \frac{1}{p} + \frac{1}{q} = 1$,
$\|\hat{f}\|_{q} \leq  \|f\|_{p}$
where $\hat{f}$ is the classical Fourier transform defined by:
\begin{equation}\label{e5453}
\hat{f}(\lambda)= \int_{\mathbb{R}^{n}} e^{-2 i \pi <\lambda,x>} f( x ) dx.
\end{equation}
This sharp form of the Hausdorff-Young inequality was  extended by W.Bechner \cite{B.K}   in the form:
 \begin{equation}\label{e1.1}
 \| \hat{f}\|_{q} \leq  \left( \frac{p^{\frac{1}{p}}  }{q^{ \frac{1}{q}}}\right) ^{\frac{n}{2}} \|f\|_{p}.
\end{equation}
Next, using the method as W.Bechner, A.Fitouhi \cite{F.A} in 1975 proved that the Fourier-Bessel transform $\mathcal{F}(f)$ of $ f \in L^{p}(\mathbb{R}_{+}, \frac{x^{2\alpha +1}}{ 2^{\alpha} \Gamma(\alpha + 1)} dx) $ satisfies the following inequality:
\begin{equation}\label{e2}
 \|\mathcal{F}(f)\|_{q,\alpha} \leq  \left( \frac{p^{\frac{1}{p}}  }{q^{ \frac{1}{q}}}\right)^{\alpha + 1}  \|f\|_{p,\alpha},\quad \alpha > -\frac{1}{2}, 
\end{equation}
where $\mathcal{F} $ is given by:
\begin{equation}\label{e3}
 \mathcal{F}( f ) (\lambda ) = \frac{1}{ 2^{\alpha} \Gamma(\alpha + 1)} \int_{0}^{ + \infty} f(x) j_{\alpha}( \lambda x )x^{2\alpha +1}dx,
\end{equation}
and $j_{\alpha}$ is the normalized Bessel function of index $\alpha$, defined by:
 \begin{equation}\label{e3}
    j_{ \alpha} ( z ) = \Gamma ( \alpha +1 ) \sum _{ n=0} ^{ +\infty}     \frac{(-1) ^{n} }{ n ! \Gamma ( n + \alpha + 1 )}  (\frac{z}{2} )^{2 n}. \quad\forall z \in \mathbb{C} .
 \end{equation}
 Moreoer, F. Bouzeffour \cite{F.B} in 2014 adapted these ideas and he applied them to obtain the Babenko-Bechner-type inequality for the Dunkl transform in real line.\\
   \quad In \cite{Z.B.NA,N.B.T}, Ben Salem.N and Ben Nahia.Z was defined the Weinstein operator $ \Delta_{w}^{\alpha,d}$  on $ \mathbb{R}_{+}^{d+1} = \mathbb{R}^{d} \times [0,+\infty [ $ by
\begin{equation}\label{e4}
 \Delta_{w}^{\alpha,d} = \Delta_{d}+\mathcal{L}_{\alpha},\quad\alpha > -\frac{1}{2},
\end{equation}
  where $\Delta_{d}$ is the ordinary Laplacian on $\mathbb{R}^{d}$  and $\mathcal{L} _{\alpha}$ is the Bessel operator for the last variable $ x_{d+1}$.\\
  The Weinstein operator  $ \Delta_{w}^{\alpha,d}$  has several applications in pure and applied mathematics, especially in
fluid mechanics \cite{B.M}.
The Weinstein transform generalizing the usual Fourier transform, is given for $  f \in L _{\alpha} ^{1} ( \mathbb{R} _{+} ^{d + 1}) $ and $\lambda \in \mathbb{C}_{+}^{d+1}$, by
$$  \mathcal{F} _{w} ^{\alpha ,d } ( f ) ( \lambda )  =  \int _{\mathbb{R} _{+} ^{d + 1} }  f(x ) \Psi _{ \alpha , d } ( x , \lambda ) d\nu_{\alpha ,d}(x),$$ where   $ d\nu_{\alpha ,d}(x)$ is the measure on $\mathbb{R}_{+}^{d+1}$ and $  \Psi _{ \alpha , d }$
is the Weinstein kernel and  given respectively later
by (\ref{e42}) and (\ref{e5}).
The purpose of the present paper is to establish  the Babenko-Bechner-type inequality for the Weinstein transform associated to the the Weinstein operator.
The paper is organized as follows: In the first section we recall some basic Harmonic results related with the differential operator $\Delta_{w}^{\alpha,d}$ given by (\ref{e4}). In the second section we establish a Babenko-Bechner-type inequality for the Weinstein transform. As application, new Young's type inequality is given.

\section{Preliminaries}
 In this section, we collect some notations and results related to the Weinstein operator $ \Delta_{w}^{\alpha,d}$ (see \cite{Z.B.NA,N.B.T}) .\\
  In the following we denote by
  \begin{itemize}
  \item $\mathbb{R}_{+}^{d+1}= \mathbb{R}^{d}\times [0,+\infty [$
  \item $ x=(x_{1},\cdots,x_{d} , x_{d+1}) $;\quad $ x^{\prime} =( x_{1},\cdots,x_{d} ) $
  \item $\|x\|= \sqrt{x_{1}^{2}+\cdots + x_{d+1}^{2}}$
    \item $\mathcal{C}_{*}( \mathbb{R}^{d+1})$, the space of continuous functions on $  \mathbb{R}^{d+1}$, even with respect to the last variable.
 \item  $\mathcal{S}_{*}(\mathbb{R}^{d+1})$, the space of the $C ^{\infty}$functions, even with respect to the last variable and rapidly decreasing together with their derivatives.
          \item $\mathcal{D}_{*}( \mathbb{R}^{d+1})$, the space of the $C ^{\infty} -$ functions on $  \mathbb{R}^{d+1}$ which are of compact support, even with respect to the last variable.

 \item $ L_{\alpha}^{p} (\mathbb{R}_{+}^{d+1}) $, $ 1\leq p \leq +\infty $, the space of measurable functions $ f $ on $  \mathbb{R}_{+}^{d+1}$ such that:
 \begin{eqnarray*}
 \|f\|_{\alpha,p}& =& \left( \int_{\mathbb{R}_{+}^{d+1}} |f ( x )|^{p}d\nu_{\alpha,d}( x ) \right)^{\frac{1}{p}}  < + \infty,\quad if \quad p \in [1, + \infty[,\\
 \|f\|_{\alpha,\infty}& = &ess \sup _{x \in \mathbb{R}_{+}^{d+1}} |f ( x )|  < + \infty,
 \end{eqnarray*}
where $ d\nu_{\alpha,d}$ is the measure  on $\mathbb{R}_{+}^{d+1}$ given by:
      \begin{equation}\label{e42}
           d\nu_{\alpha ,d} ( x ) = \frac{x _{d+1} ^{ 2 \alpha+ 1}   }{ ( 2\pi )^{     \frac{d}{2}}  2 ^{\alpha} \Gamma ( \alpha + 1)} dx.
           \end{equation}
          \end{itemize}



The Weinstein Kernel $ \Psi _{ \alpha , d} ( x, . )$ is given by:
\begin{equation}\label{e5}
  \Psi _{ \alpha , d} ( x , z ) = e^{- i <  x^{ \prime} ,  z ^{\prime} > }  j_{ \alpha} (x _{ d+ 1} z _{d+1} )\quad \forall (x,z)\in \mathbb{R} ^{ d + 1}\times\mathbb{C} ^{ d + 1}.
  \end{equation}

  The function $( x, z ) \mapsto \Psi _{ \alpha , d} ( x, z )$ has a unique extention to $  \mathbb{C} ^{ d + 1}\times \mathbb{C} ^{ d + 1} $
   and has the following integral operator:
 \begin{equation}\label{e1515}
  \Psi _{ \alpha , d}( x , z )= \frac{\Gamma(\alpha+1)}{ \sqrt{\pi}\Gamma(\alpha+\frac{1}{2})}e^{-i<x^{\prime}; z^{\prime}>}\int_{-1}^{1}(1-u^{2})^{\alpha-\frac{1}{2}}e^{-i x_{d+1}z_{d+1}u}du.
 \end{equation}

The Weinstein Kernel 
satisfies the following properties:
\begin{itemize}
  \item For all $(x,  z ) \in\mathbb{C} ^{ d + 1} \times \mathbb{C} ^{ d + 1}$ we have:
 $$ \Psi _{ \alpha , d } ( x , z) =   \Psi _{ \alpha , d } ( z , x); \quad\Psi _{ \alpha , d } (x, 0 ) = 1 .  $$
  \item For all $(x,  z ) \in\mathbb{C} ^{ d + 1} \times \mathbb{C} ^{ d + 1}$ and $\lambda  \in\mathbb{C}$ we have:
  $$\Psi _{ \alpha , d } (\lambda x , z) = \Psi _{ \alpha , d } ( x , \lambda z).$$

  \item For all $ v \in \mathbb{ N} ^{ d +1} $;  $ x \in\mathbb{R}_{+} ^{d + 1} $ and  $ z \in \mathbb{ C }^{ d+1}$:
   \begin{equation}\label{e6}
   | D_{z} ^{v}  \Psi _{ \alpha , d } ( x , z ) |  \leq \| x \|^{|v|} exp \left(  \| x \| \| Img( z )\| \right),
   \end{equation}
  where
   $$ D_{z} ^{v} = \frac{\partial ^{v}}{ \partial z_{1} ^{v_{1}} ...\partial z_{d+1} ^{v_{d+1}}},$$
   and $$ | v| = v_{1} + v_{2}+ ...+ v_{ d + 1}.  $$
   \item
   $ \forall x , y \in\mathbb{R}  _{+} ^{ d+ 1}  $, we have:
   \begin{equation}\label{e7}
   | \Psi _{ \alpha , d } ( x , y) | \leq 1.
   \end{equation}
  \end{itemize}


   \begin{definition}
The Weinstein transform is given for $  f \in L _{\alpha} ^{1} ( \mathbb{R} _{+} ^{d + 1} ,d\nu_{\alpha,d}(x) ) $  by:
 \begin{equation}\label{e49}
      \mathcal{F} _{w} ^{\alpha ,d } ( f ) ( \lambda )  =  \int _{\mathbb{R} _{+} ^{d + 1} }  f(x ) \Psi _{ \alpha , d } ( x , \lambda ) d\nu_{\alpha ,d}(x),\quad \forall \lambda \in \mathbb{R} _{+} ^{d + 1}.
       \end{equation}
\end{definition}
We list some known basic properties of the Weinstein transform:
\begin{itemize}
    \item For $ f \in L _{\alpha} ^{1} ( \mathbb{R} _{+} ^{d + 1},d\nu_{\alpha,d}(x) ) $, we have:
     \begin{equation}\label{e50}
         \| \mathcal{F} _{w}^{ \alpha ,d} ( f ) \| _{\alpha , \infty} \leq  \| f \| _{ \alpha, 1}.
      \end{equation}
      \item Let  $ f \in L _{\alpha} ^{1} ( \mathbb{R} _{+} ^{d + 1},d\nu_{\alpha,d}(x) ) $ such that the function $ \mathcal{F} _{w}^{ \alpha ,d}(f)$  $\in  L _{\alpha} ^{1} ( \mathbb{R} _{+} ^{d + 1},d\nu_{\alpha,d}(x) )$, we have the following inversion formula:
          \begin{equation}\label{e53}
  f( x )= \int _{\mathbb{R}_{+} ^{d +1}}   \mathcal{F} _{w} ^{\alpha,d} ( f ) ( y ) \Psi _{\alpha,d} (- x,  y) d\nu_{ \alpha,d}(y).
 \end{equation}
 \item For all $f$, $g $ $\in \mathbb{S}_{*}(\mathbb{R}_{+}^{d+1} )$, we have the Parseval formula:
 \begin{equation}\label{e55}
 \int _{ \mathbb{R}_{+} ^{ d +1}  }  f ( x ) \overline{g} ( x ) d\nu_{ \alpha,d}(x ) =   \int _{ \mathbb{R}_{+} ^{ d +1}  } \mathcal{F} _{w} ^{\alpha,d}( f) ( \lambda ) \overline{ \mathcal{F} _{w} ^{\alpha,d} ( g )} ( \lambda ) d\nu_{ \alpha,d}( \lambda ).
 \end{equation}
 \item For every $ f \in L _{\alpha} ^{2} (\mathbb{R}_{+} ^{ d +1};d\nu_{ \alpha,d}(x )  )$, we have the Plancherel formula:
  \begin{equation}\label{e505}
   \int _{ \mathbb{R}_{+} ^{ d +1}  }   |f(x)|^{2} d\nu_{ \alpha,d}( x) = \int _{ \mathbb{R}_{+} ^{ d +1}  } |  \mathcal{F}_{w} ^{\alpha,d}(f)(\lambda)|^{2} d\nu_{ \alpha,d}( \lambda).
\end{equation}
\end{itemize}
\begin{definition}
The generalized translation operator $T_{x}^{\alpha,d}$ ,  $ x \in \mathbb{R}_{+}^{d+1} $ associated with the Weinstein operator $ \Delta_{ w} ^{\alpha,d} $ is defined for $ y \in \mathbb{R} _{+}^{d +1} $ and $  f \in\mathcal{C}_{*} (\mathbb{R} _{+}^{d +1}) $  by:
\begin{equation}\label{e59}
   T_{x}^{\alpha,d} f (y ) =\frac{a _{\alpha}}{2} \int_{0} ^{\pi} f \left( x^{ \prime }+ y ^{\prime  },\sqrt{ x_{d+1}^{2} +y_{d+1}^{2 } +2 x_{d+1} y_{d+1}\cos \theta } \right)  (\sin \theta )^{ 2\alpha} d\theta,
  \end{equation}
where $ a_{\alpha} = \frac{\Gamma(\alpha +1)}{ \sqrt{\pi}\Gamma(\alpha +\frac{1}{2})}$.
\end{definition}
\begin{definition}
  The convolution product of  $ f $, $g$ $
  \in L_{\alpha}^{1}( \mathbb{R}_{+}^{d+1}; d\nu_{ \alpha,d})$ is defined for every $ x \in \mathbb{R}_{+}^{d+1} $ by:
     \begin{equation}\label{e519}
     f*_{w}g (x)=\int_{\mathbb{R}_{+}^{d+1}} T_{x}^{\alpha,d}f(y) g(y) d\nu_{\alpha,d}(y).
     \end{equation}
     \end{definition}
     The convolution product satisfies the following properties:
     \begin{itemize}
     \item For all $f$, $g$ $ \in L_{\alpha}^{1}( \mathbb{R}_{+}^{d+1}; d\nu_{ \alpha,d})$, $f*_{w}g \in L_{\alpha}^{1}( \mathbb{R}_{+}^{d+1}; d\nu_{ \alpha,d}) $, and we have:
  \begin{equation}\label{e159}
  \mathcal{F}_{w}^{\alpha,d}(f *_{w} g ) = \mathcal{F}_{w}^{\alpha,d}(f)\mathcal{F}_{w}^{\alpha,d}(g).
  \end{equation}
  \item The function $\Psi _{\alpha,d} (x , \lambda )$ satisfies the following product formula:
  \begin{equation}\label{e61}
    \forall y \in \mathbb{R} _{+} ^{d+1} ,\Psi _{\alpha,d} (x , \lambda ) \Psi _{\alpha,d} (y , \lambda )= T_{x}^{\alpha,d}\left[ \Psi _{\alpha,d} (. , \lambda ) \right] (y).
  \end{equation}
  \item For every $ f \in L_{\alpha}^{1}( \mathbb{R}_{+}^{d+1}; d\nu_{ \alpha,d})$,
  \begin{equation}\label{e62}
    \mathcal{F}_{w} ^{ \alpha ,d } \left( T_{x}^{\alpha,d} f \right) = \Psi _{\alpha,d} (x , y)  \mathcal{F}_{w} ^{ \alpha ,d } ( f ) (y ).
\end{equation}
\item Let $ p,q, r \in [1,+\infty [ $ such that  $\frac{1}{p} +\frac{1}{q}-\frac{1}{r} = 1$, if  $ f \in L_{\alpha}^{p} (\mathbb{R}_{+}^{d+1}; d\nu_{ \alpha,d})$,\\ $   g \in L_{\alpha}^{q} (\mathbb{R}_{+}^{d+1}; d\nu_{ \alpha,d})$, then $f*_{w}g  \in  L_{\alpha}^{r}(\mathbb{R}_{+}^{d+1}; d\nu_{ \alpha,d})$ and we have:
     \begin{equation}\label{e434}
     \|f*_{w}g\|_{r,\nu_{ \alpha,d} }\leq \|f\|_{p,\nu_{ \alpha,d}}\|g\|_{q,\nu_{ \alpha,d}}.
     \end{equation}
 \end{itemize}

\section{Babenko-type inequality for the Weinstein transform}
 The following result is a consequence of the central limit theorem \cite{K.T} to obtain  the gaussian mesure $d\mu_{\alpha,d}(x)$, for this we
define the Weinstein convolution for two positives and bounded measures $\gamma_{1}$ and $\gamma_{2}$ in $\mathbb{R}_{+}^{d+1}$ and $ for f \in \mathcal{D}_{*}( \mathbb{R}^{d+1})$ by:
 \begin{equation}\label{e602}
 \gamma_{1} *_{w} \gamma_{2}( f ) = \int_{\mathbb{R}_{+}^{d+1}}\int_{\mathbb{R}_{+}^{d+1}} T_{x}^{\alpha,d}f( y ) d \gamma_{1}(x)d\gamma_{2}(y).
\end{equation}
  Inspired by the work of \cite{ B.K,F.A}, we consider the following measures $\beta$ and $\beta_{n}(x)$ given by:
 \begin{equation}\label{e1602}
 \beta = \otimes_{j=1}^{d+1} \beta_{j},
 \end{equation}
 with \begin{center}
  $ \left\{
 \begin{array}{ccc}
 \beta_{j}=    \frac{1}{2}( \delta_{-1} + \delta_{1} )\quad  1 \leq j\leq d,\\
 \beta_{d+1} =  \frac{1}{2} \left(\delta_{0} + \delta_{\sqrt{4\alpha + 4}}\right)\quad,

                                                          \end{array}\right.$ \
                                                          $$$$
                                                          and for  $x=(x_{1},..,x_{d},x_{d+1}) \in \mathbb{R}_{+}^{d+1} $;  $ n=(n_{1},...,n_{d+1}) \in \mathbb{N}^{d+1}$,
                                                          \end{center}
\begin{equation}\label{e6092}
   \beta_{n}(x)=\beta(\sqrt{n_{1}}x_{1},...,\sqrt{n_{d+1}}x_{d+1})=\otimes_{j=1}^{d+1} \beta_{j}(\sqrt{n_{j}}x_{j}),\quad n_{j}\in \mathbb{N} \setminus \{0\}.
   \end{equation}
The n-fold convolution of  $ \gamma_{n} $  is given by:
\begin{equation}\label{e2}
  \gamma_{n} = \beta_{n} *_{w} \cdots *_{w}  \beta_{n},\quad n=(n_{1},...,n_{d+1}) \in \mathbb{N}^{d+1}.
 \end{equation}


Then the sequence of measure  $ \{\gamma_{n}\}_{n}$ defined in (\ref{e2}) converge weakly to
 \begin{equation}\label{e162}
 d\mu_{\alpha,d}(x) =\frac{x _{d+1} ^{ 2 \alpha+ 1}   }{ ( 2\pi )^{     \frac{d}{2}}  2^{\alpha} \Gamma ( \alpha + 1)} e^{-\frac{\|x\|^{2}}{2}} dx.
 \end{equation}
In what follows we use the standard multi-index notation.  $$ m!=m_{1}!\cdots m_{d+1}!,\quad for \quad m = (m_{1},\cdots m_{d+1}) \in \mathbb{N}^{d+1},$$  and
  $$ x^{n} =x_{1}^{ n_{1}}\cdots x_{d+1}^{ m_{d+1}}
  ,\quad for \quad x=(x_{1},\cdots,x_{d+1}).$$
\subsection{Multidimensional Hermite polynomials}
 We consider the system of multi-dimensional Hermite polynomial (see \cite{S.G,R.O}):
 \begin{equation}\label{e001}
H_{m}^{\alpha,d}(x)= \left(\prod _{k=1}^{d}H_{m_{k}}(x_{k})\right)\times  \ell_{m_{d+1}}^{\alpha}(x
_{d+1}),
\end{equation}
where $m= (m_{1},\cdots,m_{d+1}) \in \mathbb{N}^{d+1}$,  $ x=(x_{1},x_{2}\cdots x_{d+1})  \in \mathbb{R}_{+}^{d+1}$, $H_{m_{k}}(x_{k})$ are the one-dimensional Hermite polynomial, $\{k=1\cdots d \}$ and $  \ell_{m_{d+1}}^{\alpha}$ is given by:$$  \ell_{m_{d+1}}^{\alpha}(x_{d+1}) = 2^{m_{d+1}} m_{d+1}! L_{m_{d+1}}^{\alpha}(\frac{x_{d+1}^{2}}{2}), $$ where $ L_{m_{d+1}}^{\alpha}(x_{d+1}) $ represents the Laguerre polynomial of index $\alpha\geq\frac{-1}{2} $.\\
  The  polynomials $ H_{m}^{\alpha,d}$ satisfy  the orthogonality relation:
\begin{equation}\label{e3ww6}
\int_{\mathbb{R}_{+}^{d+1}}H_{n}^{\alpha,d}(x)H_{m}^{\alpha,d}(x)d\mu_{\alpha,d}(x)=\gamma_{m}^{-1} \prod_{i=1}^{d+1} \delta_{n_{i},m_{i}},
\end{equation}
where
\begin{equation}\label{e3ww688}
\gamma_{m}= \prod_{k=1}^{d}\frac{\gamma_{m_{d+1}}}{m_{k}!},
\end{equation}
with

\begin{equation}\label{e15001}
 \gamma_{m_{d+1}}  = \frac{ (-1)^{m_{d+1}} \Gamma(\alpha+1)}{ m_{d+1}! \quad 2^{2m_{d+1}}  \Gamma(m_{d+1}+\alpha+1)},
 \end{equation}and
  $\delta_{n_{i},m_{i}}$ is the Kronecker delta.
\begin{proposition}
Let 
 $t=(t_{1},\cdots t_{d+1}) \in \mathbb{R}_{+}^{d+1}$ and $m= (m_{1} \cdots,m_{d+1}) \in \mathbb{N}^{d+1}$, the generating function of $\{H_{m}^{\alpha,d}(x)\}$ is given by:
\begin{equation}\label{eww5}
   e^{-\frac{\|t\|^{2}}{2}}\Psi _{ \alpha , d} (ix , t )=\sum_{m_{1},\cdots,m_{d +1}}^{0, \infty}\gamma_{m}H_{m}^{\alpha,d}(x) t^{m} t_{d+1}^{m_{d+1}},
\end{equation}
where $\gamma_{m}$ is defined in (\ref{e3ww688}).

\end{proposition}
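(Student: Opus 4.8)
The plan is to exploit the multiplicative structure of both sides and reduce the identity to a product of one-dimensional generating functions. First I would substitute $x\mapsto ix$ into the definition (\ref{e5}) of the Weinstein kernel; since $-i\langle ix',t'\rangle=\langle x',t'\rangle$ this gives
$$\Psi_{\alpha,d}(ix,t)=e^{\langle x',t'\rangle}\,j_\alpha(i\,x_{d+1}t_{d+1}).$$
Combining this with the factorization $e^{-\|t\|^2/2}=\Big(\prod_{k=1}^{d}e^{-t_k^2/2}\Big)e^{-t_{d+1}^2/2}$, the left-hand side splits as
$$\Big(\prod_{k=1}^d e^{x_k t_k-t_k^2/2}\Big)\times e^{-t_{d+1}^2/2}\,j_\alpha(i\,x_{d+1}t_{d+1}).$$

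For each of the first $d$ factors I would invoke the classical generating function of the Hermite polynomials, $e^{x_k t_k-t_k^2/2}=\sum_{m_k\ge 0}\frac{H_{m_k}(x_k)}{m_k!}\,t_k^{m_k}$, which is the convention compatible with the normalized Gaussian weight $(2\pi)^{-d/2}e^{-\|x'\|^2/2}$ carried by $d\mu_{\alpha,d}$ in the first $d$ variables. This already produces the factors $1/m_k!$ entering $\gamma_m$ (see (\ref{e3ww688})), together with $\prod_{k=1}^d H_{m_k}(x_k)$ and the monomials $t_1^{m_1}\cdots t_d^{m_d}$.

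The core of the argument is the one-dimensional identity for the last (Bessel) variable,
$$e^{-t_{d+1}^2/2}\,j_\alpha(i\,x_{d+1}t_{d+1})=\sum_{m_{d+1}\ge 0}\gamma_{m_{d+1}}\,\ell_{m_{d+1}}^{\alpha}(x_{d+1})\,t_{d+1}^{2m_{d+1}}.$$
To prove it I would expand both series. From (\ref{e3}), using $i^{2n}(-1)^n=1$, one has $j_\alpha(i\,x_{d+1}t_{d+1})=\Gamma(\alpha+1)\sum_{k\ge 0}\frac{(x_{d+1}t_{d+1})^{2k}}{4^{k}\,k!\,\Gamma(k+\alpha+1)}$, while $e^{-t_{d+1}^2/2}=\sum_{j\ge 0}\frac{(-1)^j}{2^{j}j!}t_{d+1}^{2j}$. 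Forming the Cauchy product and collecting the coefficient of $t_{d+1}^{2n}$ (with $n=j+k$) gives $\Gamma(\alpha+1)\sum_{k=0}^{n}\frac{(-1)^{n+k}}{2^{n+k}(n-k)!\,k!\,\Gamma(k+\alpha+1)}x_{d+1}^{2k}$. On the other side, inserting the explicit expansion $L_n^{\alpha}(y)=\sum_{k=0}^{n}(-1)^k\frac{\Gamma(n+\alpha+1)}{\Gamma(k+\alpha+1)(n-k)!}\frac{y^k}{k!}$ with $y=x_{d+1}^2/2$ into $\gamma_n\,\ell_n^{\alpha}(x_{d+1})$, and using the definition (\ref{e15001}) of $\gamma_n$ together with $\ell_n^{\alpha}(x_{d+1})=2^{n}n!\,L_n^{\alpha}(x_{d+1}^2/2)$, yields exactly the same expression once the factors $\Gamma(n+\alpha+1)$ and $n!$ cancel. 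This term-by-term matching, where the normalization constants must be tracked carefully, is the main computational obstacle.

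Finally I would assemble the pieces: multiplying the $d$ Hermite series by the Bessel series and using $\gamma_m=\gamma_{m_{d+1}}\prod_{k=1}^{d}(m_k!)^{-1}$ (from (\ref{e3ww688})) together with the definition (\ref{e001}) of $H_m^{\alpha,d}$, the product of generating functions becomes precisely $\sum_m \gamma_m H_m^{\alpha,d}(x)\,t^m t_{d+1}^{m_{d+1}}$, since $t^m t_{d+1}^{m_{d+1}}=t_1^{m_1}\cdots t_d^{m_d}t_{d+1}^{2m_{d+1}}$. The interchange of summation orders needed to regroup the double series is justified by absolute convergence of all the power series involved for $t$ near the origin, which completes the argument.
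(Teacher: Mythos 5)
Your proof is correct and follows essentially the same route as the paper: both factor $e^{-\|t\|^{2}/2}\Psi_{\alpha,d}(ix,t)$ into the $d$ one-dimensional Hermite generating functions $e^{x_{k}t_{k}-t_{k}^{2}/2}$ times the Bessel--Laguerre factor $e^{-t_{d+1}^{2}/2}j_{\alpha}(ix_{d+1}t_{d+1})$ and then identify coefficients against (\ref{e001}) and (\ref{e3ww688}). The only difference is one of detail: the paper extracts the coefficients as Taylor derivatives at $t=0$ and cites the known generating functions of $H_{m_k}$ and $\ell^{\alpha}_{m_{d+1}}$, whereas you verify the Laguerre-type identity explicitly by a Cauchy product, and your computation of the coefficient of $t_{d+1}^{2n}$ does match $\gamma_{n}\,\ell_{n}^{\alpha}(x_{d+1})$ with the normalization (\ref{e15001}).
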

\begin{proof}
For $t\in \mathbb{R}_{+}^{d+1}$, we consider the function:
 \begin{eqnarray*}
 f(t)&=&e^{-\frac{\|t\|^{2}}{2}}\Psi _{\alpha, d} (ix, t)\\
  &=&\left[\prod_{k=0}^{d} e^{-\frac{t_{k}^{2}}{2}} e^{ x _{k} t _{k} }\right] \left[ e^{-\frac{(t_{d+1})^{2}}{2}}j_{ \alpha} ( ix_{d+1}t _{ d+ 1} )\right].
  \end{eqnarray*}
Since $f$ is the product of the $d+1$ functions which have developable in integer series then
 $$ f(t) = \sum_{m_{1},..,.m_{d +1}=0}^{\infty}\left[   \frac{\partial^{|m|}}{\partial t_{1}^{m_{1}}... \partial t_{d+1}^{m_{d+ 1}}} e^{-\frac{\|t\|^{2}}{2}}\Psi _{ \alpha , d} (ix , t ) \right]_{|t|=0}  \frac{t_{1}^{m_{1}}}{m_{1}!}...\frac{t_{d}^{m_{d}}}{m_{d}!} \frac{t_{d+1}^{m_{d+1}}}{m_{d+1}!}.$$
Now, from the generating functions of $H_{m_{k}}(x _{k})$ and  the generating functions of $\ell_{m_{d+1}}^{\alpha}(x_{d+1})$
, we get:
 $$\left[   \frac{\partial^{|m|}}{\partial t_{1}^{m_{1}}... \partial t_{d+1}^{m_{d+ 1}}} e^{-\frac{\|t\|^{2}}{2}}\Psi _{ \alpha , d} (ix , t ) \right]_{|t|=0}$$
 \begin{eqnarray*}
  &=&\prod_{k=0}^{d}\left( \frac{\partial^{m_{k}}}{\partial t_{k}^{m_{k}}} e^{-\frac{t_{k}^{2}}{2}} e^{ x _{k} t _{k} } \right)_{t_{k}=0}\times \left(\frac{\partial^{m_{d+1}}}{\partial t_{d+1}^{m_{d+1}}}(e^{-\frac{(t_{d+1})^{2}}{2}}j_{\alpha}(ix_{d+1}t _{ d+ 1} ))\right)_{t_{d+1}=0}\\
  &=&  \frac{ (-1)^{m_{d+1}} \Gamma(\alpha+1)}{
 \quad 2^{2m_{d+1}}  \Gamma(m_{d+1}+\alpha+1)}\left(\prod_{k=0}^{d}  H_{m_{k}}(x_{k})\right) \times\ell_{m_{d+1}}^{\alpha}(x_{d+1}).
 \end{eqnarray*}

\end{proof}

\begin{definition}
For $|w|<1$, we define the integral operator $K_{w}(f)$ by:
\begin{equation}\label{e25}
K_{w}(f)(y)=\int_{\mathbb{R}_{+}^{d+1}}K_{w}(x,y)f(x)d\mu_{\alpha,d}(x),
\end{equation}
where $K_{w}(x,y)$ is the Mehler Kernel-type given by:
\begin{equation}\label{e26}
K_{w}(x,y)=
\frac{1}{(1-w^{2})^{\alpha+ 1 +\frac{d}{2}}}  exp\left[\frac{-w^{2}(\|x\|^{2}+\|y\|^{2})}{2(1-w^{2})}\right]\Psi_{\alpha,d}\left(\frac{-iw}{1-w^{2}}x , y \right).
\end{equation}
\end{definition}
 \begin{proposition}
Let $ n=(n_{1},\cdots,n_{d+1})\in  \mathbb{N}^{d+1}$,  for $|w|<1$, we have:
\begin{equation}\label{e27}
K_{w}(H_{n}^{\alpha,d})(y) 
= w^{|n|+n_{d+1}} H_{n}^{\alpha,d}(y)  .
\end{equation}
\end{proposition}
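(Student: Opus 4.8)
The plan is to reduce the eigenvalue relation (\ref{e27}) to a single reproducing identity for the generating function (\ref{eww5}) and then read off the eigenvalues by matching power-series coefficients. Set $G(x,t)=e^{-\|t\|^{2}/2}\Psi_{\alpha,d}(ix,t)$, so that (\ref{eww5}) reads $G(x,t)=\sum_{m}\gamma_{m}H_{m}^{\alpha,d}(x)\,t^{m}t_{d+1}^{m_{d+1}}$. Replacing each coordinate of $t$ by $wt$ multiplies the monomial $t^{m}t_{d+1}^{m_{d+1}}$ by $w^{|m|}\,w^{m_{d+1}}$, so
\[
G(y,wt)=\sum_{m}\gamma_{m}\,w^{|m|+m_{d+1}}\,H_{m}^{\alpha,d}(y)\,t^{m}t_{d+1}^{m_{d+1}}.
\]
Hence (\ref{e27}) is equivalent to the reproducing identity
\[
K_{w}\big(G(\cdot,t)\big)(y)=\int_{\mathbb{R}_{+}^{d+1}}K_{w}(x,y)\,G(x,t)\,d\mu_{\alpha,d}(x)=G(y,wt),
\]
because expanding the left-hand side term by term yields $\sum_{m}\gamma_{m}\,K_{w}(H_{m}^{\alpha,d})(y)\,t^{m}t_{d+1}^{m_{d+1}}$, and uniqueness of the coefficients then forces $K_{w}(H_{m}^{\alpha,d})(y)=w^{|m|+m_{d+1}}H_{m}^{\alpha,d}(y)$.

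First I would justify the term-by-term integration. For $|w|<1$ the kernel (\ref{e26}) carries the Gaussian factor $\exp[-w^{2}\|x\|^{2}/(2(1-w^{2}))]$, while by the symmetry of $\Psi_{\alpha,d}$ together with the bound (\ref{e6}) one has $|\Psi_{\alpha,d}(ix,t)|\le e^{\|x\|\,\|t\|}$. Thus the integrand is dominated by an integrable Gaussian and the series converges absolutely and uniformly for $t$ in compact sets, so Fubini's theorem applies and the interchange is legitimate.

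Next I would prove the reproducing identity by exploiting that every object factorizes over the $d$ ``flat'' coordinates $x_{1},\dots,x_{d}$ and the Bessel coordinate $x_{d+1}$: indeed $\Psi_{\alpha,d}(x,z)=e^{-i\langle x',z'\rangle}j_{\alpha}(x_{d+1}z_{d+1})$, the measure $d\mu_{\alpha,d}$ is the product of $d$ standard Gaussians with the weight $\frac{x_{d+1}^{2\alpha+1}}{2^{\alpha}\Gamma(\alpha+1)}e^{-x_{d+1}^{2}/2}dx_{d+1}$, and $K_{w}(x,y)$ splits likewise, the prefactor $(1-w^{2})^{-(\alpha+1+d/2)}$ distributing as $(1-w^{2})^{-1/2}$ per flat variable and $(1-w^{2})^{-(\alpha+1)}$ for the last. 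The integral therefore becomes a product of $d$ one-dimensional Gaussian integrals and one Bessel integral. Each flat factor is the classical Mehler computation: completing the square in $x_{k}$ cancels the $(1-w^{2})^{-1/2}$ prefactor and returns $e^{-w^{2}t_{k}^{2}/2}\,e^{w t_{k}y_{k}}$, the $k$-th factor of $G(y,wt)$, contributing the power $w^{m_{k}}$.

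The main obstacle is the last coordinate, where I must evaluate
\[
\frac{(1-w^{2})^{-(\alpha+1)}}{2^{\alpha}\Gamma(\alpha+1)}\int_{0}^{\infty}\exp\!\left[\frac{-w^{2}(x_{d+1}^{2}+y_{d+1}^{2})}{2(1-w^{2})}-\frac{x_{d+1}^{2}}{2}\right]j_{\alpha}\!\left(\frac{-iw}{1-w^{2}}x_{d+1}y_{d+1}\right)j_{\alpha}(i x_{d+1}t_{d+1})\,x_{d+1}^{2\alpha+1}\,dx_{d+1}
\]
and show it equals $e^{-w^{2}t_{d+1}^{2}/2}\,j_{\alpha}(i y_{d+1}w t_{d+1})$, the last factor of $G(y,wt)$, which carries $w^{2m_{d+1}}$ since this factor is even and expands in $t_{d+1}^{2m_{d+1}}$. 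This is a Weber/Hille--Hardy-type exponential integral for a product of Bessel functions: after the substitution $u=x_{d+1}^{2}/2$ and passing from the normalized $j_{\alpha}$ to $J_{\alpha}$, it reduces to the known identity
\[
\int_{0}^{\infty}e^{-p t^{2}}J_{\nu}(at)J_{\nu}(bt)\,t\,dt=\frac{1}{2p}\exp\!\left(-\frac{a^{2}+b^{2}}{4p}\right)I_{\nu}\!\left(\frac{ab}{2p}\right),\qquad \Re\nu>-1,
\]
rewritten through the relation between $I_{\alpha}$ and $j_{\alpha}$ of imaginary argument. Establishing this Bessel evaluation in the normalized variables and confirming that the emerging $w$-power is exactly $w^{2m_{d+1}}$ is the technical heart of the argument; combined with the flat factors $w^{m_{1}+\cdots+m_{d}}$ this gives the total eigenvalue $w^{|m|+m_{d+1}}$. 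An equivalent route is to first derive the bilinear Mehler expansion $K_{w}(x,y)=\sum_{n}w^{|n|+n_{d+1}}\gamma_{n}H_{n}^{\alpha,d}(x)H_{n}^{\alpha,d}(y)$ from the same factorization and then apply the orthogonality relation (\ref{e3ww6}); the analytic content, namely the Bessel integral above, is identical.
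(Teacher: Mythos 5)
Your plan is sound and, in substance, it is the argument the paper omits: the printed proof of this proposition is a single sentence ("from the expression of the Mehler kernel and the polynomials we can easily find the result"), so your reduction of \eqref{e27} to the reproducing identity $K_{w}(G(\cdot,t))(y)=G(y,wt)$ for the generating function \eqref{eww5}, followed by the factorization of kernel, measure and generating function over the $d$ Gaussian coordinates and the Bessel coordinate, is exactly the computation that has to be done. The domination argument via \eqref{e6} and the symmetry of $\Psi_{\alpha,d}$ is correct, the flat factors are indeed the classical Mehler integral, and Weber's second exponential integral is the right tool for the last coordinate (it is the standard route to the Hille--Hardy/Laguerre Mehler kernel); your alternative route through the bilinear expansion of $K_{w}$ plus the orthogonality relation \eqref{e3ww6} is equivalent and equally valid.

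One concrete point deserves care when you execute the flat-coordinate computation. With the kernel exactly as written in \eqref{e26}, the factor $\Psi_{\alpha,d}\bigl(\tfrac{-iw}{1-w^{2}}x,y\bigr)$ contributes $e^{-\frac{w}{1-w^{2}}\langle x',y'\rangle}$ in the first $d$ variables (since $(-i)(-i)=-1$), and completing the square then returns $e^{-w^{2}t_{k}^{2}/2}\,e^{-w t_{k}y_{k}}$, not $e^{+w t_{k}y_{k}}$ as you assert. That is, the reproducing identity one actually obtains from \eqref{e26} is $K_{w}(G(\cdot,t))(y)=G\bigl((-y',y_{d+1}),wt\bigr)$, so the flat eigenvalues come out as $(-w)^{n_{k}}$ and the stated formula \eqref{e27} holds only up to the parity $H_{n_{k}}(-y_{k})=(-1)^{n_{k}}H_{n_{k}}(y_{k})$ (the Bessel coordinate is unaffected because $j_{\alpha}$ is even). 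This is an internal sign inconsistency of the paper rather than a flaw in your strategy, but your write-up silently assumes the favourable sign; you should either track it explicitly or note that the kernel must carry $\Psi_{\alpha,d}\bigl(\tfrac{iw}{1-w^{2}}x,y\bigr)$ for \eqref{e27} to hold literally as stated.
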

\begin{proof}
From the the expression of the Mehler Kernel-type $K_{w}(x,y)$ defined in (\ref{e26}) and the polynomials $ H_{m}^{\alpha,d}(x)$ defined in (\ref{e001}) , we can easily find the result.
\end{proof}
\begin{lemma}\label{e000090}
Let $ n,m \in \mathbb{N}$, $ x=(x_{1},x_{2}\cdots, x_{n})$, $u=(u_{1},u_{2} \cdots, u_{n}) \in \mathbb{R}^{n}$, the Hermite polynomial satisfies the following relation:
 $$ H_{m}(x_{1}u_{1}+...+x_{n}u_{n})=\sum_{s=0}^{m}\frac{2^{m}m!}{(m-s)!}h_{s}(x_{1}(u_{1}-1),\cdots,x_{n}(u_{n}-1))H_{m-s}(x_{1}+\cdots+x_{n}),$$
where the homogenous symmetric polynomial $ h_{s}(y) $ are given by $$h_{0}(y)=1\quad and \quad
 h_{s}(y)=\sum_{|k|=s}\frac{y^{k}}{k!};\quad y\in \mathbb{R}^{n}.$$
\end{lemma}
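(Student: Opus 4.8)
The plan is to prove the multinomial expansion of the Hermite polynomial $H_m(x_1u_1+\cdots+x_nu_n)$ by reducing it to the classical one-variable addition/generating-function identity for Hermite polynomials. First I would recall the generating function $\sum_{m\ge0} H_m(z)\frac{s^m}{m!}=e^{2zs-s^2}$, which is the cleanest tool here because the claimed identity has the flavour of matching Taylor coefficients in an auxiliary variable $s$. The idea is to write $z=x_1u_1+\cdots+x_nu_n$ and split each term as $x_ju_j=x_j+x_j(u_j-1)$, so that $z=(x_1+\cdots+x_n)+\sum_{j=1}^n x_j(u_j-1)$. Setting $Y=x_1+\cdots+x_n$ and $y_j=x_j(u_j-1)$, the exponent becomes $2zs-s^2=\bigl(2Ys-s^2\bigr)+2s\sum_j y_j$, which factors the generating function as a product.

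The key computation is then
\begin{equation}\label{eplan}
\sum_{m\ge0}H_m(z)\frac{s^m}{m!}=e^{2Ys-s^2}\cdot e^{2s\sum_j y_j}=\left(\sum_{r\ge0}H_r(Y)\frac{s^r}{r!}\right)\left(\sum_{t\ge0}\frac{(2s)^t}{t!}\Bigl(\textstyle\sum_j y_j\Bigr)^t\right).
\end{equation}
Next I would expand $\bigl(\sum_j y_j\bigr)^t$ by the multinomial theorem and recognise that $\sum_{|k|=t}\frac{y^k}{k!}=h_t(y)$ by the very definition of $h_s$ given in the statement, so the second factor equals $\sum_{t\ge0}(2s)^t\,h_t(y)$. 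Multiplying the two series and collecting the coefficient of $s^m$ via the Cauchy product (matching $r+t=m$, i.e. $r=m-t$, and renaming the summation index $t=s$) would yield exactly
$$H_m(z)=\sum_{s=0}^m\frac{2^m m!}{(m-s)!}\,h_s\bigl(x_1(u_1-1),\dots,x_n(u_n-1)\bigr)\,H_{m-s}(x_1+\cdots+x_n),$$
after tracking the factorial and power-of-two bookkeeping: the factor $\frac{s^{m-t}}{(m-t)!}$ from the Hermite series, $(2s)^t$ from the exponential series, and the convention $h_0=1$.

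The main obstacle, as usual with generating-function proofs, is purely the constant-chasing: verifying that $\frac{m!}{(m-t)!}\cdot 2^t$ from the naive Cauchy product matches the stated coefficient $\frac{2^m m!}{(m-s)!}$. These differ by a factor $2^{m-t}=2^{m-s}$, which signals that $h_s$ in the paper's normalisation must already absorb that extra power, or that the Hermite generating function being used is the ``probabilist'' variant $e^{zs-s^2/2}$ rather than the physicist one. I would therefore pin down at the outset which Hermite normalisation the paper adopts (consistent with equation~(\ref{e001}) and the generating functions invoked in the preceding proposition) and adjust the exponential accordingly; once the correct generating function $\sum_m H_m(z)s^m/m! = e^{2zs-s^2}$ (or its variant) is fixed, the coefficient extraction is a routine comparison. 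No step is genuinely deep; the argument is entirely formal manipulation of absolutely convergent power series, which is justified since all series in \eqref{eplan} are entire in $s$.
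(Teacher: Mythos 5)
The paper does not actually prove this lemma: its ``proof'' is the single sentence ``For the proof, we refer \cite{F.B}'', so your generating-function argument is a genuine, self-contained alternative rather than a reproduction of anything in the text. The route you take --- writing $x_1u_1+\cdots+x_nu_n=(x_1+\cdots+x_n)+\sum_j x_j(u_j-1)$, factoring the exponential $e^{2zt-t^2}$ accordingly, identifying $\frac{1}{s!}\bigl(\sum_j y_j\bigr)^s=\sum_{|k|=s}y^k/k!=h_s(y)$ via the multinomial theorem, and extracting the coefficient of $t^m$ from the Cauchy product --- is the standard proof of this addition formula and is sound.

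The one point you flagged but left open does not resolve the way you hope. With the physicist normalization $\sum_m H_m(z)t^m/m!=e^{2zt-t^2}$ (the one consistent with $H_1(x)=2x$ and with the conventions of (\ref{e001})), the coefficient extraction gives
$$H_m(z)=\sum_{s=0}^m\frac{2^{s}\,m!}{(m-s)!}\,h_s\bigl(x_1(u_1-1),\dots,x_n(u_n-1)\bigr)\,H_{m-s}(x_1+\cdots+x_n),$$
with $2^{s}$, while the probabilist normalization $e^{zt-t^2/2}$ yields no power of $2$ at all; neither convention produces the $2^{m}$ printed in the statement, and the definition of $h_s$ is fixed in the lemma, so it cannot absorb the missing $2^{m-s}$. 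The printed coefficient is a misprint for $2^{s}$: for $m=n=1$ the stated formula gives $2x_1+2x_1u_1$, whereas $H_1(x_1u_1)=2x_1u_1$. So your argument is correct, but what it proves is the identity with $2^{s}$; you should state that correction explicitly rather than leave it as an unresolved normalization issue.
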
\label{e0000054}
\begin{proof}
For the proof, we refer \cite{F.B}.
\end{proof}
\begin{lemma}
 If $x_{i}=(x_{i_{1}},...,x_{i_{d+1}}) \in  \mathbb{R}^{d+1}$ for $1 \leq i \leq n$, satisfies the following conditions:\\
 \begin{center}
 $ \left\{
 \begin{array}{ccc}\label{4A}
                                                                   x_{1_{j}}^{2} = x_{2_{j}}^{2} ... = x_{n_{j}}^{2} =\frac{1}{n_{j} }, \quad 1\leq j\leq d,\\
                                                                       x_{1_{d+1}}^{2} =...= x_{n_{d+1}}^{2} = \frac{4\alpha +4}{n_{d+1}}\quad or\quad 0, \\
                                                                   \end{array}\right.$ \
                                                                   \end{center}

then, for every $ m=(m_{1},\cdots,m_{d+1})\in \mathbb{N}^{d+1}$
, such that $ d+1\leq |m|\leq |n|$  we have:
\begin{equation}\label{e28}
\gamma_{m_{d+1}}  T_{x_{1}}^{\alpha,d}\circ T_{x_{2}}^{\alpha,d}\circ ...\circ T_{x_{n-1}}^{\alpha,d} H_{m}^{\alpha,d}(x_{n}) = \mathcal \phi_{m,n}^{\alpha,d}(x_{1},...x_{n})  + \mathcal P_{m,n}^{\alpha,d}(x_{1},...x_{n}),
\end{equation}
 where $\gamma_{m_{d+1}}$ is defined in (\ref{e15001}),  $\mathcal P_{m,n}^{\alpha,d}(x_{1},...x_{n}) $ is a polynomial of degree less than $ |m|-1$ such that
$$ P_{m,n}^{\alpha,d}(x_{1},...x_{n}) \rightarrow 0,\quad n_{j} \rightarrow \infty, \quad  \forall \quad 1\leq j \leq d+1, $$
 and $\phi_{m,n}^{\alpha,d}(x_{1},...x_{n})$ is a homogeneous symmetric polynomial of degree $|m|$ which is defined as follow
 \begin{equation}\label{00A}
\phi_{m,n}^{\alpha,d}(x_{1},..,x_{n})=
\end{equation}
$$\left[\prod_{k=1}^{d} m_{k}!\sigma_{n_{k},m_{k}}(x_{1_{k}},..,x_{n_{k}})\right]\left[ \sum_{s=0}^{m_{d+1}} 2^{m_{d+1}} \tilde{h_{s}}(x_{1_{d+1}},..,x_{n_{d+1}})\sigma_{n_{d+1} m_{d+1}-s}(x_{1_{d+1}},...,x_{n_{d+1}})\right]
$$
where
$$\sigma_{n_{i}l}(x_{1_{i}},..,x_{n{i}}) = \sum_{1\leq j_{1} <...< j_{l} \leq n_{i}   }   x_{j_{1}}...x_{j_{l}}\quad 1\leq i \leq d+1,$$
and
 $$ \tilde{h_{s}}(x_{1_{d+1}},...,x_{n_{d+1}}) =(-1)^{s}\sum_{|k|=s}\frac{1}{k!}C_{k_{1_{d+1}},\cdots,k_{n_{d+1}}}x_{1_{d+1}}^{k_{1_{d+1}}}\cdots x_{n_{d+1}}^{k_{n_{d+1}}},$$
 with
 $$C_{k_{1_{d+1}},\cdots,k_{n_{d+1}}} =  2^{2(n_{d+1}) \alpha + |k|} \prod_{i=1}^{n_{d+1}} \frac{\Gamma(\alpha + k_{i_{d+1}} +\frac{1}{2})  \Gamma(\alpha+1)         }{\pi^{\frac{1}{2}}\Gamma(2\alpha + k_{i_{d+1}}+1)}.$$
\end{lemma}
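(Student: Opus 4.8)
The plan is to exploit the tensor–product structure of $H_m^{\alpha,d}$ together with the fact that the generalized translation $T_x^{\alpha,d}$ of (\ref{e59}) acts as an ordinary shift on the first $d$ variables and as a Bessel–type angular average only on the last one. Since $H_m^{\alpha,d}(x)=\left(\prod_{k=1}^d H_{m_k}(x_k)\right)\ell_{m_{d+1}}^{\alpha}(x_{d+1})$ factorises by (\ref{e001}), the composition $T_{x_1}^{\alpha,d}\circ\cdots\circ T_{x_{n-1}}^{\alpha,d}$ applied to $H_m^{\alpha,d}$ and evaluated at $x_n$ splits into a product of $d$ one–dimensional shift computations and a single last–variable Bessel computation. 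First I would treat the first $d$ coordinates: each shift replaces the argument of $H_{m_k}$ by the full sum, giving $H_{m_k}(x_{1_k}+\cdots+x_{n_k})$. Imposing the constraint $x_{i_k}^2=1/n_k$ reduces every squared variable to the constant $1/n_k$, so that after expansion the only homogeneous, multilinear (square–free) part of degree $m_k$ is exactly $m_k!\,\sigma_{n_k,m_k}(x_{1_k},\dots,x_{n_k})$; every other monomial either drops the degree below $m_k$ or acquires an extra power of $1/n_k$, and is therefore absorbed into the remainder and shown to vanish as $n_k\to\infty$.

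The core of the argument is the last coordinate. Here I would write $\ell_{m_{d+1}}^{\alpha}(x_{d+1})=2^{m_{d+1}}m_{d+1}!\,L_{m_{d+1}}^{\alpha}(x_{d+1}^2/2)$ and push the iterated Bessel translation through the integral representation of the kernel (\ref{e1515})--(\ref{e59}), in which the measure $(\sin\theta)^{2\alpha}\,d\theta$ produces the Beta/Gamma–function moment constants $C_{k_{1_{d+1}},\dots,k_{n_{d+1}}}$. Combining this with the Hermite addition formula of Lemma~\ref{e000090}, whose weights $u_i$ encode the action of the translation on each factor, separates the leading homogeneous symmetric part $\sum_{s=0}^{m_{d+1}}2^{m_{d+1}}\tilde{h}_s\,\sigma_{n_{d+1},m_{d+1}-s}$ from lower–order debris; the normalising factor $\gamma_{m_{d+1}}$ of (\ref{e15001}) is precisely the constant that makes the bracket come out in the stated form. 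Imposing $x_{i_{d+1}}^2\in\{(4\alpha+4)/n_{d+1},\,0\}$ again collapses squared variables to constants and leaves only the square–free symmetric monomials.

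Multiplying the $d$ first–coordinate contributions by the last–coordinate bracket and by $\gamma_{m_{d+1}}$ yields the homogeneous symmetric polynomial $\phi_{m,n}^{\alpha,d}$ of degree $|m|$ in (\ref{00A}), while everything left over is collected into $\mathcal{P}_{m,n}^{\alpha,d}$. To finish I would verify the two advertised properties of $\mathcal{P}$: a degree count shows it has degree at most $|m|-1$, and a direct inspection shows that each of its monomials carries a strictly larger power of some $1/n_j$ than its degree allows, forcing $\mathcal{P}_{m,n}^{\alpha,d}\to 0$ as $n_j\to\infty$ for every $1\le j\le d+1$.

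I expect the main obstacle to be the last–variable Bessel translation: unlike the pure shift in the first $d$ coordinates, iterating the angular average in (\ref{e59}) on the Laguerre factor does not factor trivially, and one must carry the Bessel integral through the Hermite addition expansion to obtain the exact moment constants $C$, then argue uniformly which terms survive the $n_{d+1}\to\infty$ limit. The bookkeeping that simultaneously identifies the correct leading coefficient $2^{m_{d+1}}\tilde{h}_s$, keeps the degree count honest, and guarantees the decay of the remainder is where the real care is needed. An alternative and likely cleaner route is to bypass the iterated integral altogether by applying the product formula (\ref{e61}) inside the generating function (\ref{eww5}): using homogeneity of $\Psi_{\alpha,d}$ one gets $T_{x_1}^{\alpha,d}\circ\cdots\circ T_{x_{n-1}}^{\alpha,d}\big[\Psi_{\alpha,d}(i\,\cdot\,,t)\big](x_n)=\prod_{i=1}^{n}\Psi_{\alpha,d}(ix_i,t)$, after which the whole claim follows by reading off the coefficient of $t^m t_{d+1}^{m_{d+1}}$.
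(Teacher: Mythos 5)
Your proposal is essentially correct, and the ``alternative route'' you sketch in your final paragraph is precisely the proof the paper gives: the authors apply the product formula (\ref{e61}) inside the generating function (\ref{eww5}) to identify $e^{-\|t\|^{2}/2}\prod_{i=1}^{n}\Psi_{\alpha,d}(ix_{i},t)$ with the generating series of $\gamma_{m}\,T_{x_{1}}^{\alpha,d}\circ\cdots\circ T_{x_{n-1}}^{\alpha,d}H_{m}^{\alpha,d}(x_{n})$, expand the same product via the integral representation (\ref{e1515}) together with the one--dimensional Hermite generating function, and equate coefficients of the powers of $t$. The moment constants $C_{k_{1_{d+1}},\dots,k_{n_{d+1}}}$ then arise exactly as you predict, from integrating $h_{s}\bigl(x_{1_{d+1}}(u_{1}-1),\dots,x_{n_{d+1}}(u_{n}-1)\bigr)$ against the weight $\prod_{k}(1-u_{k}^{2})^{\alpha-\frac{1}{2}}$ after invoking Lemma \ref{e000090}, and the passage from $H_{l}(x_{1_{i}}+\cdots+x_{n_{i}})$ to $l!\,\sigma_{n_{i}l}+O(1/n_{i})$ is the approximation (\ref{e1604}) you also use, with the boundedness of the $b_{l,r}(n_{i})$ giving the decay of $\mathcal{P}_{m,n}^{\alpha,d}$. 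Your primary route --- iterating the translation (\ref{e59}) directly on the factorised polynomial (\ref{e001}) --- is a reorganisation of the same computation rather than a genuinely different argument; its only real cost is the one you flag yourself, namely that the iterated Bessel average of the Laguerre factor does not split termwise, so one is driven back to the kernel's integral representation, which is exactly what the generating--function identity packages in one step. The one detail your ``read off the coefficient of $t^{m}t_{d+1}^{m_{d+1}}$'' glosses over is that $j_{\alpha}$ is even in $t_{d+1}$, so the last variable contributes only even powers $t_{d+1}^{2m_{d+1}}$ and the addition formula is applied to $H_{2m_{d+1}}$; with that adjustment, I would simply promote your alternative to the main line of the argument, since it is the paper's proof.
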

\begin{proof}
On the one hand, from the product formula for the Weinstein function defined in (\ref{e61}) and the relation (\ref{eww5}), we have:
 \begin{equation}\label{ewwww1}
 e^{-\frac{\|t\|^{2}}{2}}\Psi_{\alpha,d}(ix_{1},t)\cdots\Psi_{\alpha,d}(ix_{n},t) =\sum_{m_{1},\cdots,m_{d+1} = 0}^{\infty}\gamma_{m}
  T_{x_{1}}^{\alpha,d}\circ T_{x_{2}}^{\alpha,d}\circ ...\circ T_{x_{n-1}}^{\alpha,d} H_{m}^{\alpha,d}(x_{n})t_{1}^{m_{1}}...t_{d+1}^{2
 m_{d+1}}.
\end{equation}
On the other hand, from  the relations (\ref{e5}) and (\ref{e1515}) we get:
$$e^{-\frac{\|t\|^{2}}{2}}\Psi_{\alpha,d}(ix_{1},t)\cdots\Psi_{\alpha,d}(ix_{n},t)=e^{-\frac{\|t\|^{2}}{2}} e^{<x_{1}^{\prime},t^{\prime}>}j_{\alpha}(ix_{1_{d+1}}t_{d+1}) \cdots e^{<x_{n}^{\prime},t^{\prime}>}j_{\alpha}(ix_{n_{d+1}}t_{d+1}),$$
then
$$ e^{-\frac{\|t\|^{2}}{2}}\Psi_{\alpha,d}(ix_{1},t)\cdots\Psi_{\alpha,d}(ix_{n},t)$$
\begin{equation}\label{e1500}
=a_{n_{d+1}}(\alpha)\left(\prod_{k=1}^{d} e^{-\frac{t_{k}^{2}}{2}} e^{t_{k}\sum _{j=1}^{n}x_{j_{k}}}\right)\displaystyle\int_{[-1,1]^{n_{d+1}}}e^{-\frac{t_{d+1}^{2}}{2}+t_{d+1}(x_{1_{d+1}}u_{1_{d+1}}+...+ x_{n_{d+1}}u_{n_{d+1}})} w_{\alpha}(u)du,
\end{equation}
where
$$w_{\alpha}(u)= \prod_{k=1}^{n_{d+1}}(1-u_{k_{d+1}}^{2})^{\alpha-\frac{1}{2}}\quad and \quad    a_{n_{d+1}}(\alpha)=\frac{(\Gamma(\alpha+1))^{n_{d+1}}}{\pi^{\frac{n_{d+1}}{2}}(\Gamma(\alpha+\frac{1}{2}))^{n_{d+1}}} .$$
The generating function for the Hermite polynomials gives:
\begin{equation}\label{e187}
 e^{t(y_{1}u_{1}+\cdots +y_{n}u_{n})-\frac{t^{2}}{2}}= \sum_{k=0}^{\infty} H_{k}(y_{1}u_{1}+\cdots +y_{n}u_{n})\frac{t^{k}}{ k!}.
 \end{equation}
 From the identities (\ref{e1500}) and (\ref{e187}), we have:
 $$ e^{-\frac{\|t\|^{2}}{2}}\Psi_{\alpha,d}(ix_{1},t)\cdots\Psi_{\alpha,d}(ix_{n},t ) =\prod_{k=1}^{d}\left(\sum_{l_{k}=0}^{\infty}H_{l_{k}}(x_{1_{k}}+...+x_{n_{k}})\frac{t_{k}^{l_{k}}}{ l_{k}!}\right) $$ $$\times
 \left(a_{n_{d+1}}(\alpha)\int_{[-1,1]^{n_{d+1}}}\left[\sum_{l_{d+1}=0}^{\infty} H_{l_{d+1}}(x_{1_{d+1}}u_{1_{d+1}}+...+x_{n_{d+1}}u_{n_{d+1}})\frac{ t_{d+1}^{l_{d+1}}}{l_{d+1}!}\right]w_{\alpha}(u)du\right).$$
 Equating the coefficients of the power of t in the last relation and (\ref{ewwww1}), we obtain:\\
$$ \gamma_{m}
 T_{x_{1}}^{\alpha,d}\circ T_{x_{2}}^{\alpha,d}\circ ...\circ T_{x_{n-1}}^{\alpha,d} H_{m}^{\alpha,d}(x_{n}) = \left[\prod_{k=1}^{d}H_{m_{k}}(x_{1_{k}}+\cdots+x_{n_{k}}) \frac{1}{m_{k }!}\right] $$ $$ \times \left[
  a_{n_{d+1}}(\alpha)\int_{[-1,1]^{n_{d+1}}}\frac{1}{ 2m_{d+1
  }!}H_{2m_{d+1}}(x_{1_{d+1}}u_{1_{d+1}}+\cdots +x_{n_{d+1}}u_{n_{d+1}}) w_{\alpha}(u)du\right].$$
   From lemma \ref{e000090}, we get:
$$ a_{n_{d+1}}(\alpha)\int_{[-1,1]^{n_{d+1}}}\frac{1}{2m_{d+1}!}H_{2m_{d+1}}(x_{1_{d+1}}u_{1_{d+1}}+\cdots + x_{n_{d+1}}u_{n_{d+1}}) w_{\alpha}(u)du$$
\begin{eqnarray*}
&=&\sum_{s=0}^{2m_{d+1}}\frac{2^{2m_{d+1}}}{(2m_{d+1}-s)!} \tilde{h_{s}}(x_{1_{d+1}},\cdots,x_{n_{d+1}})H_{2m_{d+1}-s}(x_{1_{d+1}}+\cdots+x_{n_{d+1}}),
\end{eqnarray*}
where
\begin{eqnarray*}
\tilde{h_{s}} (x_{1_{d+1}},\cdots,x_{n_{d+1}})&=&
 \end{eqnarray*}
 $$ a_{n_{d+1}}(\alpha)\int_{[-1,1]^{n_{d+1}}} h_{s}(x_{1_{d+1}}(u_{1_{d+1}}-1),\cdots,x_{n_{d+1}}(u_{n_{d+1}}-1))w_{\alpha}(u)du,$$
 $$=(-1)^{s}\sum_{|k|=s}\frac{1}{k!}C_{k_{1_{d+1}},\cdots,k_{n_{d+1}}}x_{1_{d+1}}^{k_{1_{d+1}}}\cdots x_{n_{d+1}}^{k_{n_{d+1}}},$$

with

$$C_{k_{1_{d+1}},\cdots,k_{n_{d+1}}} =  2^{2(n_{d+1}) \alpha + |k|} \prod_{i=1}^{n_{d+1}} \frac{\Gamma(\alpha + k_{i_{d+1}} +\frac{1}{2})  \Gamma(\alpha+1)         }{\pi^{\frac{1}{2}}\Gamma(2\alpha + k_{i_{d+1}}+1)}.$$
We can prove  for
\begin{center}
 $ \left\{
 \begin{array}{ccc}
                                                                   x_{1_{j}}^{2} = x_{2_{j}}^{2}= \cdots = x_{n_{j}}^{2} =\frac{1}{n_{j}}, \quad 1\leq j\leq d,\\
                                                                       x_{1_{d+1}}^{2} =\cdots= x_{n_{d+1}}^{2} = \frac{4\alpha +4}{n_{d+1}}\quad or\quad 0, \\
                                                                   \end{array}\right .$ \

\end{center}
 that the Hermite polynomial $H_{l} (x_{1_{i}}+\cdots + x_{n_{i}})$ for $ 1\leq i\leq d+1$ can be approximated by the elementary symmetric functions $\sigma_{n_{i}l}(x_{1_{i}},\cdots,x_{n_{i}})$, $ 1\leq l \leq n_{i}$, where
$$\sigma_{n_{i}l}(x_{1_{i}},..,x_{n{i}}) = \sum_{1\leq j_{1} <...< j_{l} \leq n_{i}   }   x_{j_{1}}...x_{j_{l}},$$
as follows:
\begin{equation}\label{e1604}
 H_{l} (x_{1_{i}}+\cdots + x_{n_{i}}) =  l! \sigma_{n_{i}l}(x_{1_{i}},..,x_{n_{i}})  + \frac{1}{n_{i}}\sum_{r=1}^{\frac{[l]}{2}} b_{l,r}(n_{i}) H_{l-2r} (x_{1_{i}}+\cdots + x_{n_{i}}),
 \end{equation}
 where
$ b_{l,r}(n_{i})$ are bounded with respect to $n_{i}$ for a fixed $l$, (see\cite{B.K, F.B}).\\

 Thus we get
\begin{equation}\label{e1704}
\prod_{k=1}^{d}H_{m_{k}}(x_{1_{k}}+\cdots+x_{n_{k}})=
\end{equation}
$$\prod_{k=1}^{d}\left(  m_{k}! \sigma_{n_{k}m_{k}}(x_{1_{k}},\cdots,x_{n_{k}})  + \frac{1}{n_{k}}\sum_{r=1}^{[\frac{m_{k}}{2}]} b_{m_{k},r}(n_{k}) H_{m_{k}-2r} (x_{1_{k}}+\cdots + x_{n_{k}})\right),$$
and $$\sum_{s=0}^{2m_{d+1}}\frac{2^{2m_{d+1}}}{(2m_{d+1}-s)!}\tilde{h_{s}}(x_{1_{d+1}},\cdots,x_{n_{d+1}})H_{2m_{d+1}-s}(x_{1_{d+1}}+...+x_{n_{d+1}})$$
 \begin{equation}\label{e9090}
 =\phi_{m_{d+1}}(x_{1_{d+1}},...,x_{n_{d+1}}) + \frac{1}{n_{d+1}}\mathcal P_{m_{d+1}}(x_{1_{d+1}},...,x_{n_{d+1}}),
\end{equation}
where
$$\phi_{m_{d+1}}(x_{1_{d+1}},...,x_{n_{d+1}})=\sum_{s=0}^{2m_{d+1}} 2^{2m_{d+1}}\tilde{h_{s}}(x_{1_{d+1}},...,x_{n_{d+1}})\sigma_{n_{d+1} 2m_{d+1}-s}(x_{1_{d+1}},...,x_{n_{d+1}}),$$
 and$$  \mathcal P_{m_{d+1}}(x_{1_{d+1}},...,x_{n_{d+1}})=\sum_{s=0}^{2m_{d+1}}\sum_{r=1}^{[ \frac{2m_{d+1}-s}{2}]}b_{2m_{d+1}-s,r}(n_{d+1}) H_{2m_{d+1}-s-2r}(x_{1_{d+1}}+...+x_{n_{d+1}}).$$
 The result follows from  relations (\ref{e1704}) and (\ref{e9090}).
\end{proof}
\subsection{Babenko Inequality}
This subsection is devoted to establish the Babenko-Bechner-type inequality for the Weinstein transform.\\
 The product measures $\beta_{n}(x_{1})...\beta_{n}(x_{n} )$ defined in (\ref{e6092}) are discrete, all functions over these measures spaces can be identified as polynomials of the form  $\prod_{k=1}^{n}P(x_{k})$ where\\$P(x_{k})=\prod_{j=1}^{d}\left( a_{k_{j}}+b_{k_{j}}x_{k_{j}}\right)\left( a_{k_{d+1}}+ b_{1_{d+1}}x_{k_{d+1}}^{2}\right)$; $ \forall $ $ x_{k}=(x_{k_{1}},\cdots,x_{k_{d+1}}) \in \mathbb{R}_{+}^{d+1} $.\\
We define an analogue $C$ of the multiplier $K_{w}$ on the measure space over $\beta $ defined in (\ref{e1602}).
 $$ K_{w}:  a H_{0}^{\alpha,d} + b H_{1}^{\alpha,d} \mapsto a H_{0}^{\alpha,d} + b  w^{d+2} H_{1}^{\alpha,d}$$
 $$ C : a + b \left[ \prod _{j=1}^{d}x_{j}\left\{(2 \alpha +2) - x_{d+1}^{2} \right\} \right] \rightarrow  a +   b w^{d+2}\left[ \prod _{j=1}^{d}x_{j}\left\{(2 \alpha +2) - x_{d+1}^{2} \right\} \right].$$
The operator $ C$ :$ L^{p}(\beta)\rightarrow L^{q}(\beta)$ is bounded, (see \cite{B.K}).\\
For $ 1\leq k \leq n $, define operators:
$$ C_{n,k} =  a + b \left[ \prod _{j=1}^{d}x_{j,k}\left\{(2 \alpha +2) - x_{d+1,k}^{2} \right\} \right] \mapsto a +  b  w^{d+2}\left[ \prod _{j=1}^{d}x_{k,j}\left\{(2 \alpha +2) - x_{k,d+1}^{2} \right\} \right]$$
where $a$ and $b$ are functions of the remaining $n-1$ vectors.\\
and defines
\begin{equation}\label{e1a3}
D_{w,n} =C_{n,1}.C_{n,2}...C_{n,n}.
\end{equation}
$D_{w,n}$ is a bounded linear operator in $ L^{p}\left[\beta_{n}(x_{1})...\beta_{n}(x_{n} )\right]$ to $ L^{q}\left[ \beta_{n}(x_{1})...\beta_{n}(x_{n} ) \right]$, (see\cite{B.K}and \cite{F.A}).\\
The restriction $\overline{D_{w,n}} $ of the operator $ D_{w,n}$ 
will also be a linear operator. We denote this function space of symmetric functions over the product measures $\beta_{n}(x_{1})...\beta_{n}(x_{n} )$ by $X_{n}$.
\begin{theorem}\label{eàà}
Let $ 1 < p \leq 2 $,
 with $\frac{1}{p}+ \frac{1}{q} = 1$ and $ w = i\sqrt{p-1}$. Then the operator $ K_{w} $  satisfies the following inequality:
\begin{equation}\label{e2580}
\| K_{w} (f ) \|_{q, \mu_{\alpha,d}} \leq \| f \|_{p, \mu_{\alpha,d}}.
\end{equation}
\end{theorem}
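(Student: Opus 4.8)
The plan is to follow the central limit theorem method of Beckner \cite{B.K}, as adapted to the Bessel setting by Fitouhi \cite{F.A} and to the Dunkl setting by Bouzeffour \cite{F.B}: transport a sharp two-point inequality on the discrete measure $\beta$ of (\ref{e1602}) to the Gaussian measure $d\mu_{\alpha,d}$ by exploiting the weak convergence $\gamma_n \to d\mu_{\alpha,d}$ recorded in (\ref{e162}). The first step is the single-factor inequality: the operator $C$ on the measure space over $\beta$ satisfies $\|C(g)\|_{L^q(\beta)} \leq \|g\|_{L^p(\beta)}$ precisely when $w = i\sqrt{p-1}$ and $1 < p \leq 2$. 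Since every function over the discrete $\beta$ is affine in the generator $G = \prod_{j=1}^{d}x_j\{(2\alpha+2) - x_{d+1}^2\}$, this reduces to verifying, for $g = a + bG$, the elementary extremal estimate $\|a + b\,w^{d+2}G\|_{L^q(\beta)} \leq \|a + bG\|_{L^p(\beta)}$ over the finite support of $\beta$. This is exactly the Beckner two-point inequality in the present mixed Hermite--Laguerre normalization, and its validity is what forces the choice $w = i\sqrt{p-1}$.

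Next I would tensorize. Because $p \leq q$, the $L^p \to L^q$ operator norm is submultiplicative under tensor products of measure spaces (the analogous statement fails for $p > q$, which is why the two-point bound must be sharp). Hence from $\|C_{n,k}\|_{L^p \to L^q} \leq 1$ for each factor $1 \leq k \leq n$ and the definition (\ref{e1a3}), the product operator $D_{w,n} = C_{n,1}C_{n,2}\cdots C_{n,n}$ obeys $\|D_{w,n}(F)\|_{q} \leq \|F\|_{p}$ over the product measure $\beta_n(x_1)\cdots\beta_n(x_n)$, uniformly in $n$.

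Then I would pass to the Gaussian limit on the symmetric subspace $X_n$. The preceding Lemma, in relation (\ref{e28}), identifies the chained translate $T_{x_1}^{\alpha,d}\circ\cdots\circ T_{x_{n-1}}^{\alpha,d} H_m^{\alpha,d}(x_n)$ as a homogeneous symmetric polynomial $\phi_{m,n}^{\alpha,d}$ of degree $|m|$ plus a remainder $P_{m,n}^{\alpha,d}$ that vanishes as each $n_j \to \infty$. Applying $D_{w,n}$ to $\phi_{m,n}^{\alpha,d}$ reproduces the eigenvalue factor $w^{|m|+m_{d+1}}$ of the relation (\ref{e27}), so that $D_{w,n}$ acting on the embedded Hermite polynomials converges, as $n \to \infty$, to $K_w$ acting on $H_m^{\alpha,d}$. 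Since $\gamma_n \to d\mu_{\alpha,d}$ weakly, the $L^p$ and $L^q$ norms over $\gamma_n$ converge to those over $d\mu_{\alpha,d}$ on this polynomial subspace; letting $n \to \infty$ in the uniform bound $\|D_{w,n}(F)\|_q \leq \|F\|_p$ then yields (\ref{e2580}) for every finite Hermite expansion, and density of such polynomials in $L^p(d\mu_{\alpha,d})$ extends it to all $f$.

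The main obstacle is twofold. The first is establishing the two-point inequality of Step~1 with constant $1$ in the present normalization, which is the genuine analytic core of Beckner's method and is here complicated by the Laguerre weight in the $(d+1)$-st variable, forcing the quadratic generator $(2\alpha+2) - x_{d+1}^2$ in place of a linear term. The second is rigorously justifying the interchange of limit and norm in Step~3: one must control the lower-order remainders $P_{m,n}^{\alpha,d}$ and confirm that the weak convergence of $\gamma_n$ upgrades to convergence of the $L^p$ and $L^q$ norms on the relevant finite-dimensional polynomial subspace.
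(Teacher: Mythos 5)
Your proposal follows exactly the paper's route: Beckner's central limit theorem method, i.e.\ the two-point inequality for $C$ on the discrete measure $\beta$, tensorization to $D_{w,n}$ on the product measure, passage to the Gaussian measure $d\mu_{\alpha,d}$ via the symmetric-polynomial approximation of the iterated translates $T_{x_1}^{\alpha,d}\circ\cdots\circ T_{x_{n-1}}^{\alpha,d}H_m^{\alpha,d}$ from the preceding lemma together with the weak convergence $\gamma_n\to\mu_{\alpha,d}$, and finally density. You are in fact somewhat more explicit than the paper about the two points it delegates to citations of Beckner and Fitouhi, namely that the two-point bound must hold with constant $1$ (not mere boundedness) and that tensorization of the $L^p\to L^q$ norm requires $p\le q$; otherwise the arguments coincide.
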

\begin{proof}
 Using the relation  expressed in (\ref{e28}), the polynomials $  T_{x_{1}}^{\alpha,d}\circ T_{x_{2}}^{\alpha,d}\circ ...\circ T_{x_{n-1}}^{\alpha,d} H_{m}^{\alpha,d}(x_{n})$ are approximated by 
the polynomials $ \mathcal \phi_{m,n}^{\alpha,d}(x_{1},...x_{n})$. Since, the operator $ \overline{D}_{w,n} $ defined in (\ref{e1a3}) acts on $ \mathcal \phi_{m,n}^{\alpha,d}(x_{1},..,x_{n})$ as:
\begin{equation}\label{e385}
\overline{D}_{w,n} \mathcal \phi_{k,n}^{\alpha,d}(x_{1},..,x_{n}) =\mathcal\phi_{k,n}^{\alpha,d}( wx_{1},..,wx_{n})= w^{|k|+k_{d+1}} \mathcal \phi_{k,n}^{\alpha,d}(x_{1},..,x_{n }),
 \end{equation}
then the natural replacement for $K_{w}$ is the operator $\overline{D}_{w,n}$.\\
 Let $g(x)$ be a polynomial, then there exist a vectors $v_{1},\cdots v_{M}$ such that:
$$ g(x) =\sum_{l= 0}^{M} v_{l} H_{l}^{\alpha,d}(x); \quad  v_{l}= (v_{l_{0}},..,v_{l_{d+1}}).$$
Put
 $$ G_{n}(x_{1},...,x_{n}) = \sum_{l = 0}^{M} v_{l} \phi_{l,n}^{\alpha,d}(x_{1},..,x_{n}). $$
By  lemma \ref{e0000054}, we have:
\begin{equation}\label{eA81}
 T_{x_{1}}^{\alpha,d}\circ T_{x_{2}}^{\alpha,d}\circ ...\circ T_{x_{n-1}}^{\alpha,d}(g)(x_{n}) - G_{n}(x_{1},...,x_{n}) = \sum_{l = 0}^{M} v_{l}\mathcal P_{l}^{\alpha,d}(x_{1},...,x_{n}),
\end{equation}
and
\begin{equation}\label{eA8}
T_{x_{1}}^{\alpha,d}\circ T_{x_{2}}^{\alpha,d}\circ ...\circ T_{x_{n-1}}^{\alpha,d} K_{w}(g)(x_{n}) - \overline{D}_{n}G_{n}(x_{1},...,x_{n})=\sum_{l = 0}^{M} v_{l} w^{|l|+l_{d+1}}\mathcal P_{l}^
{\alpha,d}(x_{1},...,x_{n}).
\end{equation}
Thus
$$\lim_{n_{i}\rightarrow \infty}\left(\int_{(\mathbb{R}_{+}^{d+1})^{|n|}} \left|  T_{x_{1}}^{\alpha,d}\circ T_{x_{2}}^{\alpha,d}\circ ...\circ T_{x_{n-1}}^{\alpha,d}(g)(x_{n})\right|^{p} d\beta_{n}(x_{1})...d\beta_{n}(x_{n} )\right)^\frac{1}{p}\quad 1\leq i \leq d+1$$
$$ =\lim_{n_{i}\rightarrow \infty}\left(\int_{(\mathbb{R}_{+}^{d+1})^{|n|}} \left|G_{n}(x_{1},...,x_{n})\right|^{p} d\beta_{n}(x_{1})...d\beta_{n}(x_{n} )  \right)^\frac{1}{p}, $$ and
$$\lim_{n_{i}\rightarrow \infty}\left(\int_ {(\mathbb{R}_{+}^{d+1})^{|n|}} \left|T_{x_{1}}^{\alpha,d}\circ T_{x_{2}}^{\alpha,d}\circ ...\circ T_{x_{n-1}}^{\alpha,d}( K_{w}g)(x_{n})\right|^{q} d\beta_{n}(x_{1})...d\beta_{n}(x_{n} )\right)^\frac{1}{q}$$
$$ =\lim_{n_{i}\rightarrow \infty}\left(\int_{(\mathbb{R}_{+}^{d+1})^{|n|}} \left|\overline{D}_{w,n}G_{n}(x_{1},...,x_{n})\right|^{q} d\beta_{n}(x_{1})...d\beta_{n}(x_{n} )\right)^\frac{1}{q},  $$ where $(\mathbb{R}_{+}^{d+1})^{|n|}= \prod_{k=1}^{d}\mathbb{R}^{n_{k}}\times\mathbb{R}_{+}^{n_{d+1}}.$\\

The identities (\ref{e2}) and (\ref{e162}) lead to:
$$\lim_{n_{i}\rightarrow \infty}\left(\int_{(\mathbb{R}_{+}^{d+1})^{|n|}} \left|  T_{x_{1}}^{\alpha,d}\circ T_{x_{2}}^{\alpha,d}\circ ...\circ T_{x_{n-1}}^{\alpha,d}(g)(x_{n})\right|^{p} d\beta_{n}(x_{1})...d\beta_{n}(x_{n} )\right)^\frac{1}{p} $$$$=\lim_{n_{i}\rightarrow \infty}\left(\int_{\mathbb{R}_{+}^{d+1}}\left|g (x)\right|^{p} d\gamma_{n}\right)^{\frac{1}{p}},$$then
\begin{equation}\label{e191}
\lim_{n_{i}\rightarrow \infty}\left(\int_{(\mathbb{R}_{+}^{d+1})^{|n|}} \left|  T_{x_{1}}^{\alpha,d}\circ T_{x_{2}}^{\alpha,d}\circ ...\circ T_{x_{n-1}}^{\alpha,d}(g)(x_{n})\right|^{p} d\beta_{n}(x_{1})...d\beta_{n}(x_{n} ) \right)^\frac{1}{p}
\end{equation}
$$=\left(\int_{\mathbb{R}_{+}^{d+1}}\left|g (x)\right|^{p} d\mu_{\alpha,d}(x)\right)^{\frac{1}{p}}.$$
By a similar argument
\begin{equation}\label{e4040}
\lim_{n_{i}\rightarrow \infty}\left(\int_{(\mathbb{R}_{+}^{d+1})^{|n|}}\left|T_{x_{1}}^{\alpha,d}\circ T_{x_{2}}^{\alpha,d}\circ ...\circ T_{x_{n-1}}^{\alpha,d}(K_{w}g)(x_{n})\right|^{q} d\beta_{n}(x_{1})...d\beta_{n}(x_{n} )\right)^\frac{1}{q}
\end{equation}
$$=\left(\int_{\mathbb{R}_{+}^{d+1}}\left|K_{w}g (x)\right|^{q} d\mu_{\alpha,d}(x)\right)^{\frac{1}{q}}.$$

  By the relations (\ref{e191})and (\ref{e4040}), we obtain:
$$\| K_{w} (g ) \|_{q, \mu_{\alpha,d}} \leq \| g \|_{p, \mu_{\alpha,d}} .$$
By density, the result is deduced
(see\cite{B.K}).
\end{proof}
\begin{theorem}( \textbf{Babenko Inequality})\\
Let $ 1 < p \leq 2$, $ q = \frac{p}{p-1}$ and let $ A_{p} = \frac{p^{\frac{1}{p}}}{q^{\frac{1}{q}}} $ . If $ f \in L^{p}( \mathbb{R}_{+}^{d+1}; d\nu_{\alpha,d})$, then the Weinstein transform $ \mathcal{F}_{w}^{\alpha,d}(f)\in L^{q}( \mathbb{R}_{+}^{d+1}; d\nu_{\alpha,d})$ and we have: 
\begin{equation}\label{e2180}
 \|\mathcal{F}_{w}^{\alpha,d}(f)\|_{q,\nu_{\alpha,d}} \leq A_{p}^{\alpha + \frac{d}{2} + 1 } \| f \|_{p,\nu_{\alpha,d}}.
 \end{equation}
\end{theorem}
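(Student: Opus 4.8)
The plan is to deduce \eqref{e2180} from the operator bound \eqref{e2580} of the preceding theorem, transferring the estimate from the Gaussian measure $d\mu_{\alpha,d}$ of \eqref{e162} to the flat measure $d\nu_{\alpha,d}$ of \eqref{e42} via a Gaussian-weighted substitution. It suffices to prove \eqref{e2180} for $f$ in a dense subclass of $L^{p}(\mathbb{R}_{+}^{d+1};d\nu_{\alpha,d})$, say $\mathcal{D}_{*}(\mathbb{R}^{d+1})$, where $\mathcal{F}_{w}^{\alpha,d}(f)$ is given directly by \eqref{e49}, and then to conclude by density as in \cite{B.K}.

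First I would fix $w=i\sqrt{p-1}$, so that $1-w^{2}=p$, $\tfrac{-w^{2}}{2(1-w^{2})}=\tfrac{p-1}{2p}$ and $\tfrac{-iw}{1-w^{2}}=\tfrac{\sqrt{p-1}}{p}$; substituting into \eqref{e26} gives
\[
K_{w}(x,y)=\frac{1}{p^{\alpha+1+\frac{d}{2}}}\exp\!\Big[\tfrac{(p-1)(\|x\|^{2}+\|y\|^{2})}{2p}\Big]\,\Psi_{\alpha,d}\Big(\tfrac{\sqrt{p-1}}{p}x,\,y\Big).
\]
Then I would set $g(x)=f(x)\,e^{\|x\|^{2}/(2p)}$; since $d\mu_{\alpha,d}=e^{-\|x\|^{2}/2}\,d\nu_{\alpha,d}$, a direct check yields $\|g\|_{p,\mu_{\alpha,d}}=\|f\|_{p,\nu_{\alpha,d}}$. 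Inserting $g$ into \eqref{e25}, the three Gaussian weights in $x$ (from the kernel, from $g$, and from $d\mu_{\alpha,d}$) have exponents $\tfrac{p-1}{2p}+\tfrac{1}{2p}-\tfrac12=0$ and so cancel; applying the homogeneity $\Psi_{\alpha,d}(\lambda x,y)=\Psi_{\alpha,d}(x,\lambda y)$ with $\lambda=\tfrac{\sqrt{p-1}}{p}$ and the definition \eqref{e49}, I obtain
\[
K_{w}(g)(y)=\frac{1}{p^{\alpha+1+\frac{d}{2}}}\,e^{\frac{(p-1)\|y\|^{2}}{2p}}\;\mathcal{F}_{w}^{\alpha,d}(f)\Big(\tfrac{\sqrt{p-1}}{p}y\Big).
\]

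Next I would compute $\|K_{w}(g)\|_{q,\mu_{\alpha,d}}^{q}$. Writing again $d\mu_{\alpha,d}=e^{-\|y\|^{2}/2}\,d\nu_{\alpha,d}$, the factor $e^{q(p-1)\|y\|^{2}/(2p)}$ coming from $|K_{w}(g)|^{q}$ meets this Gaussian weight, and the crucial point is that the conjugacy relation $q(p-1)=p$ forces the total exponent $\tfrac{q(p-1)}{2p}-\tfrac12$ to vanish, so the Gaussians cancel \emph{exactly} and only the rescaled Fourier integrand survives. A dilation $\lambda=\tfrac{\sqrt{p-1}}{p}y$, whose Jacobian on $d\nu_{\alpha,d}$ produces the factor $\big(\tfrac{\sqrt{p-1}}{p}\big)^{-2(\alpha+\frac{d}{2}+1)}$ (the power $2(\alpha+\tfrac{d}{2}+1)$ arising jointly from the $dy$ scaling and the $y_{d+1}^{2\alpha+1}$ weight), then expresses $\|K_{w}(g)\|_{q,\mu_{\alpha,d}}$ as an explicit constant times $\|\mathcal{F}_{w}^{\alpha,d}(f)\|_{q,\nu_{\alpha,d}}$. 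Invoking \eqref{e2580} together with $\|g\|_{p,\mu_{\alpha,d}}=\|f\|_{p,\nu_{\alpha,d}}$ yields \eqref{e2180} with constant $\big(p\cdot(\tfrac{\sqrt{p-1}}{p})^{2/q}\big)^{\alpha+\frac{d}{2}+1}$ on the dense subclass.

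The final and most delicate step is purely algebraic: I must verify that this constant equals $A_{p}^{\alpha+\frac{d}{2}+1}$. Using $p-1=p/q$ and $\tfrac{1}{q}=\tfrac{p-1}{p}$, one simplifies $p\cdot(\tfrac{\sqrt{p-1}}{p})^{2/q}=p^{(2-p)/p}(p-1)^{1/q}=p^{1/p}/q^{1/q}=A_{p}$, which closes the argument. The hard part will be the bookkeeping: correctly tracking the exponent $\alpha+\tfrac{d}{2}+1$ through both the kernel normalization $(1-w^{2})^{-(\alpha+1+d/2)}$ and the dilation Jacobian, and confirming the exact Gaussian cancellation, since any slip in the relations $\tfrac1p+\tfrac1q=1$ and $q(p-1)=p$ would spoil both the cancellation and the final constant. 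The passage from $\mathcal{D}_{*}(\mathbb{R}^{d+1})$ to all of $L^{p}(\mathbb{R}_{+}^{d+1};d\nu_{\alpha,d})$ is then a routine density argument.
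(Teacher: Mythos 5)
Your proposal is correct and follows essentially the same route as the paper: both deduce \eqref{e2180} from the bound \eqref{e2580} on the Mehler-type operator $K_{w}$ with $w=i\sqrt{p-1}$, using the Gaussian weights to convert $d\mu_{\alpha,d}$ into $d\nu_{\alpha,d}$ and the homogeneity of $\Psi_{\alpha,d}$ to recover $\mathcal{F}_{w}^{\alpha,d}$. The only (cosmetic) difference is that the paper rescales both variables up front via $x=\sqrt{p}\,u$, $y=\sqrt{q}\,v$ so that the kernel reduces exactly to $\Psi_{\alpha,d}(u,v)$ and the constants $p^{(\alpha+\frac{d}{2}+1)/p}$, $q^{(\alpha+\frac{d}{2}+1)/q}$ come from the rescaled measures, whereas you defer the rescaling to a single dilation of the argument of $\mathcal{F}_{w}^{\alpha,d}$ at the end and extract the same constant $A_{p}^{\alpha+\frac{d}{2}+1}$ from the dilation Jacobian.
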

\begin{proof}
Let $ w= i\sqrt{p-1}$, $ x = \sqrt{p}u $, $ y = \sqrt{q}v $ and $ q = \frac{p}{p-1}$, then  the kernel $k_{w}(x,y)$ given by the relation (\ref{e26}) becomes:
\begin{equation}\label{e348}
K_{w}(x,y)=\frac{1}{p^{\alpha +1 + \frac{d}{2}}} e^{\frac{(p-1)\|u\|^{2}}{2} } e^{\frac{\|v\|^{2}}{2}} \Psi_{\alpha,d} \left(u,v\right).
\end{equation}
Using the relation (\ref{e162}), we have:
\begin{equation}\label{e2654}
d\mu_{\alpha ,d} ( x ) =   e^{\frac{-p\|u\|^{2}}{2} }\frac{p^{\alpha+1 + \frac{d}{2}} u_{d+1}^{2\alpha+1}} {( 2\pi )^{     \frac{d}{2}}  2 ^{\alpha} \Gamma ( \alpha + 1) } du,
\end{equation}
and
\begin{equation}\label{e101}
d\mu_{\alpha ,d} ( y ) =   e^{\frac{-q\|v\|^{2}}{2} }\frac{q^{\alpha+1 + \frac{d}{2}} v_{d+1}^{2\alpha+1}} {( 2\pi )^{     \frac{d}{2}}  2 ^{\alpha} \Gamma ( \alpha + 1) } dv.
\end{equation}
Hence, by relations  (\ref{e42}) and (\ref{e348}), we have:

$$\int_{\mathbb{R}_{+}^{d+1}}K_{w}(x,y)g(x) d\mu_{\alpha,d}(x)$$
\begin{eqnarray*}
&=& \int_{\mathbb{R}_{+}^{d+1}}\frac{1}{p^{\alpha +1 + \frac{d}{2}}} e^{\frac{(p-1)\|u\|^{2}}{2} } e^{\frac{\|v\|^{2}}{2}} \Psi_{\alpha,d} \left(u, v \right) g(\sqrt{p}u)e^{\frac{-p\|u\|^{2}}{2} }\frac{p^{\alpha+1+\frac{d}{2}} u_{d+1}^{2\alpha+1}} {( 2\pi )^{     \frac{d}{2}}  2 ^{\alpha} \Gamma ( \alpha + 1) } du \\
&=&e^{\frac{\|v\|^{2}}{2}}\int_{\mathbb{R}_{+}^{d+1}} g(\sqrt{p}u)e^{-\frac{\|u\|^{2}}{2}}  \Psi_{\alpha,d} \left(u, v \right)d\nu_{\alpha,d}(u).
\end{eqnarray*}
If we put
\begin{equation}\label{e2800}
f(u) = g(\sqrt{p}u)e^{-\frac{\|u\|^{2}}{2}},
\end{equation}
 then, from the Weinstein transform defined in (\ref{e49}), we get
\begin{equation}\label{e11111178}
\left|\int_{\mathbb{R}_{+}^{d+1}}K_{w}(x,y)g(x) d\mu_{\alpha,d}(x)\right|^{q}=e^{\frac{q\|v\|^{2}}{2}} \left| \mathcal{F}_{w}^{\alpha,d}(f)(v)\right|^{q}.
\end{equation}
So, by the relations (\ref{e162}), (\ref{e101}) and (\ref{e11111178}), we obtain:
\begin{eqnarray*}
\|K_{w}(g)\|_{q,\mu_{\alpha,d}}   &=&\left(\int_{\mathbb{R}_{+}^{d+1}} \left|K_{w}(g)(y)\right|^{q}  d\mu_{\alpha,d}(y)\right)^{\frac{1}{q}}\\
&=& \left(\int_{\mathbb{R}_{+}^{d+1}} \left|\int_{\mathbb{R}_{+}^{d+1}}K_{w}(x,y)g(x) d\mu_{\alpha,d}(x)\right|^{q} d\mu_{\alpha,d}(y)\right)^{\frac{1}{q}}\\
&=&\left(\int_{\mathbb{R}_{+}^{d+1}}e^{\frac{q\|v\|^{2}}{2}} \left| \mathcal{F}_{w}^{\alpha,d}(f)(v)\right|^{q} d\mu_{\alpha,d}(y)\right)^{\frac{1}{q}}\\
&=&\left(\int_{\mathbb{R}_{+}^{d+1}}e^{\frac{q\|v\|^{2}}{2}} \left|\mathcal{F}_{w}^{\alpha,d}(f)(v)\right|^{q}e^{\frac{-q\|v\|^{2}}{2} }\frac{q^{\alpha+1 + \frac{d}{2}} v_{d+1}^{2\alpha+1}} {( 2\pi )^{     \frac{d}{2}}  2 ^{\alpha} \Gamma ( \alpha + 1) } dv\right)^{\frac{1}{q}}
\\&=&\left(\int_{\mathbb{R}_{+}^{d+1}} q^{\alpha+1+\frac{d}{2}}\left|\mathcal{F}_{w}^{\alpha,d}(f)(v)\right|^{q}d\nu_{\alpha,d}(v)\right)^{\frac{1}{q}}
\end{eqnarray*}
we deduce that:
 \begin{equation}\label{e20008}
\|K_{w}(g)\|_{q,\mu_{\alpha,d}} = q^{\frac{\alpha+ \frac{d}{2}+1}{q}} \| \mathcal{F}_{w}^{\alpha,d}(f)\|_{q,\nu_{\alpha,d}}.
\end{equation}
On the other hand, the identities (\ref{e2654}), (\ref{e2800}) lead to:
\begin{eqnarray*}
\|g\|_{p,\mu_{\alpha,d}}&=&\left(\int_{\mathbb{R}_{+}^{d+1}}\left|g(x)\right|^{p}d\mu_{\alpha,d}(x)\right)^{\frac{1}{p}}\\
 &=&\left(\int_{\mathbb{R}_{+}^{d+1}}\left|g(\sqrt{p}u)\right|^{p} e^{\frac{-p\|u\|^{2}}{2}}\frac{p^{\alpha+1+\frac{d}{2}}}  {( 2\pi )^{     \frac{d}{2}}  2 ^{\alpha} \Gamma ( \alpha + 1) }u_{d+1}^{2\alpha+1} du \right)^{\frac{1}{p}}\\
 &=&\left(\int_{\mathbb{R}_{+}^{d+1}} p^{\alpha+1+\frac{d}{2}}\left|f(u)\right|^{p} d\nu_{\alpha,d}(u)\right)^{\frac{1}{p}}.
\end{eqnarray*}
So, we obtain:
\begin{equation}\label{e4428}
 \|g\|_{p,\mu_{\alpha,d}} = p^{\frac{\alpha+ \frac{d}{2}+1}{p}} \|f\|_{p,v_{\alpha,d}}.
 \end{equation}
 Using the relations (\ref{e2580}), (\ref{e20008}) and (\ref{e4428}), we get:
$$\|\mathcal{F}_{w}^{\alpha,d}(f)\|_{q, \nu_{\alpha,d}} \leq A_{p}^{\frac{d}{2} +\alpha + 1} \|f\|_{p, \nu_{\alpha,d}}.$$
\end{proof}
\section {Application}
\begin{proposition}(\textbf{ Young's-type inequality})\\
Let p, q and r three real numbers such that $ 1\leq p, q, r \leq 2 $ and $\frac{1}{r}=\frac{1}{p}+\frac{1}{q}-1$. Then  for $ f\in L^{p}(\mathbb{R}_{+}^{d+1}; d\nu_{\alpha,d})$ and $ g\in L^{q}(\mathbb{R}_{+}^{d+1}; d\nu_{\alpha,d})$,  we have $f*_{w} g \in L^{r}(\mathbb{R}_{+}^{d+1}; d\nu_{\alpha,d})$ and
\begin{equation}\label{e4422222222222228}
\|f*_{w}g\|_{r,\nu_{\alpha,d}}\leq \left(\frac{ r^{\frac{1}{r}}p^{ \frac{1}{p} }q^{ \frac{1}{q}} }{ r_{1}^{\frac{1}{r}_{1}}p_{1}^{\frac{1}{p_{1}}} q_{1}^{\frac{1}{q_{1}}} }\right)^{\alpha + \frac{d}{2}+1} \|f\|_{p,\nu_{\alpha,d}}  \|g\|_{q,\nu_{\alpha,d}},
\end{equation}
where $ p_{1}= \frac{p}{p-1}$, $ q_{1}= \frac{q}{q-1}$ and $ r_{1}= \frac{r}{r-1}$.
\end{proposition}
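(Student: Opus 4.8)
The plan is to reduce \eqref{e4422222222222228} to the Babenko inequality \eqref{e2180} by working on the transform side via the convolution theorem, and then to return to the physical side by inverting the Weinstein transform. Throughout write $s=\alpha+\frac{d}{2}+1$ and, for a conjugate pair $(t,t_{1})$ with $t_{1}=t/(t-1)$, set $A_{t}=t^{1/t}/t_{1}^{1/t_{1}}$, so that the Babenko theorem reads $\|\mathcal{F}_{w}^{\alpha,d}(h)\|_{t_{1},\nu_{\alpha,d}}\le A_{t}^{s}\|h\|_{t,\nu_{\alpha,d}}$. The first thing I would record is the exponent bookkeeping: the hypothesis $\frac{1}{r}=\frac{1}{p}+\frac{1}{q}-1$ is equivalent to $\frac{1}{r_{1}}=\frac{1}{p_{1}}+\frac{1}{q_{1}}$, which is exactly the relation needed to apply H\"older's inequality with the pair $(p_{1},q_{1})$ summing to $r_{1}$; moreover the constant claimed in \eqref{e4422222222222228} is, after regrouping the factors, precisely $(A_{p}A_{q}A_{r})^{s}$.

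On the transform side, the convolution theorem \eqref{e159} gives $\mathcal{F}_{w}^{\alpha,d}(f*_{w}g)=\mathcal{F}_{w}^{\alpha,d}(f)\,\mathcal{F}_{w}^{\alpha,d}(g)$. Since $1<p,q\le 2$, I would apply the Babenko inequality \eqref{e2180} to $f$ and to $g$ separately, obtaining $\|\mathcal{F}_{w}^{\alpha,d}(f)\|_{p_{1},\nu_{\alpha,d}}\le A_{p}^{s}\|f\|_{p,\nu_{\alpha,d}}$ and $\|\mathcal{F}_{w}^{\alpha,d}(g)\|_{q_{1},\nu_{\alpha,d}}\le A_{q}^{s}\|g\|_{q,\nu_{\alpha,d}}$, and then combine them by H\"older's inequality for the pair $(p_{1},q_{1})$ to get
\begin{equation}
\|\mathcal{F}_{w}^{\alpha,d}(f*_{w}g)\|_{r_{1},\nu_{\alpha,d}}\le A_{p}^{s}A_{q}^{s}\,\|f\|_{p,\nu_{\alpha,d}}\,\|g\|_{q,\nu_{\alpha,d}}.
\end{equation}

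On the physical side, I would recover $\|f*_{w}g\|_{r,\nu_{\alpha,d}}$ from the control just obtained on the $L^{r_{1}}$-norm of its transform. Using the inversion formula \eqref{e53}, $f*_{w}g=(\mathcal{F}_{w}^{\alpha,d})^{-1}\big(\mathcal{F}_{w}^{\alpha,d}(f*_{w}g)\big)$, where the inverse transform has the same kernel up to the reflection $x\mapsto -x$, and $|\Psi_{\alpha,d}(-x,y)|=|\Psi_{\alpha,d}(x,y)|\le 1$ by \eqref{e7}. Applying the Babenko estimate in the shape appropriate to the inverse transform with the conjugate pair $(r,r_{1})$ would then contribute the missing factor $A_{r}^{s}$, and multiplying the two bounds and unfolding $A_{p}A_{q}A_{r}$ gives the stated constant. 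The endpoint cases where $p$, $q$ or $r$ equals $1$ or $2$ (where the strict hypothesis $1<\cdot\le 2$ of the Babenko theorem degenerates) I would dispatch separately, using Plancherel \eqref{e505} when an exponent is $2$ and the elementary bound \eqref{e434} otherwise, and finally pass from $\mathcal{S}_{*}(\mathbb{R}_{+}^{d+1})$ to general $f,g$ by density.

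The step I expect to be the main obstacle is this last inversion: passing from a bound on $\|\mathcal{F}_{w}^{\alpha,d}(f*_{w}g)\|_{r_{1}}$ back to $\|f*_{w}g\|_{r}$. The Hausdorff--Young/Babenko inequality runs in the direction $L^{\mathrm{small}}\to L^{\mathrm{large}}$, i.e.\ it controls $\|\mathcal{F}_{w}^{\alpha,d}\phi\|_{r_{1}}$ by $\|\phi\|_{r}$ with $r\le 2\le r_{1}$, which is the \emph{opposite} of what is needed here; a reverse inequality $\|\phi\|_{r}\le C\|\mathcal{F}_{w}^{\alpha,d}\phi\|_{r_{1}}$ is false for arbitrary $\phi$. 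To make the argument rigorous I would therefore not invoke a ``reverse'' transform bound but instead transport the sharp estimate $\|K_{w}(f)\|_{q,\mu_{\alpha,d}}\le\|f\|_{p,\mu_{\alpha,d}}$ for the Mehler-type operator $K_{w}$ directly to the convolution, reproducing the Gaussian/Beckner scheme already used to establish the Babenko inequality (the change of variables $x=\sqrt{p}\,u$, $y=\sqrt{q}\,v$ converting $d\mu_{\alpha,d}$ into $d\nu_{\alpha,d}$), which legitimately yields the factor $A_{r}^{s}$ on the physical side.
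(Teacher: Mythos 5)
Your transform-side computation (convolution theorem, Babenko applied to $f$ and to $g$, H\"older with $\tfrac{1}{r_{1}}=\tfrac{1}{p_{1}}+\tfrac{1}{q_{1}}$) reproduces the middle of the paper's own proof and is correct: it yields $\|\mathcal{F}_{w}^{\alpha,d}(f*_{w}g)\|_{r_{1},\nu_{\alpha,d}}\le (A_{p}A_{q})^{s}\|f\|_{p,\nu_{\alpha,d}}\|g\|_{q,\nu_{\alpha,d}}$ with $s=\alpha+\tfrac{d}{2}+1$. You are also right that the crux is the return step: one needs $\|f*_{w}g\|_{r,\nu_{\alpha,d}}\le C\,\|\mathcal{F}_{w}^{\alpha,d}(f*_{w}g)\|_{r_{1},\nu_{\alpha,d}}$, and since $r\le 2\le r_{1}$ this is a \emph{reverse} Hausdorff--Young inequality, which is false in general; Babenko for the inverse transform would require the source exponent $r_{1}$ to lie in $(1,2]$. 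The paper does not resolve this either: its first displayed inequality simply asserts $\|f*_{w}g\|_{r,\nu_{\alpha,d}}\le\bigl(r_{1}^{1/r_{1}}/r^{1/r}\bigr)^{s}\|\mathcal{F}_{w}^{\alpha,d}(f*_{w}g)\|_{r_{1},\nu_{\alpha,d}}$, citing (\ref{e159}), (\ref{e434}) and (\ref{e2180}), none of which gives an inequality in that direction. So your diagnosis of the gap is exactly right, but your proposed repair does not close it: ``transporting the Mehler-operator estimate directly to the convolution'' is a one-sentence sketch of an entirely separate and genuinely harder argument --- in the classical case the sharp Young inequality is \emph{not} a corollary of sharp Hausdorff--Young and required independent proofs by Beckner and Brascamp--Lieb --- and you give no computation showing that it produces the factor $A_{r}^{+s}$ you need.

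A second, related problem is the constant. A formal application of the ``inverse Babenko'' step contributes $A_{r}^{-s}$, not $A_{r}^{s}$, so the chain you (and the paper) set up would end with $(A_{p}A_{q}/A_{r})^{s}$; the paper's last line silently exchanges $r^{1/r}$ and $r_{1}^{1/r_{1}}$ to arrive at $(A_{p}A_{q}A_{r})^{s}$, so its final constant does not even follow from its own penultimate line. The distinction matters: the sharp classical Young constant has the form $(A_{p}A_{q}/A_{r})^{n/2}$, and since $A_{r}<1$ for $1<r<2$, the constant $(A_{p}A_{q}A_{r})^{s}$ in the statement is strictly smaller than the classical sharp one, so the proposition as written is already dubious for $1<r<2$. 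In short: your plan reproduces the paper's argument and correctly locates its gap, but neither you nor the paper actually bridges that gap, and the target constant itself does not survive the bookkeeping.
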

\begin{proof}
By the identities (\ref{e159}), (\ref{e434}) and (\ref{e2180}), we have:
 \begin{eqnarray*}
\|f*_{w}g\|_{r,\nu_{\alpha,d}}&\leq& \left( \frac{r_{1}^{\frac{1}{r_{1}}}   }{r^{\frac{1}{r}}}\right)^{\alpha + \frac{d}{2}+1}\|\mathcal{F}_{w}^{\alpha,d}(f*_{w}g)\|_{r_{1,\nu_{\alpha,d}}}\\
 &\leq&\left( \frac{r_{1}^{\frac{1}{r_{1}}}   }{r^{\frac{1}{r}}}\right)^{\alpha + \frac{d}{2}+1}\|\mathcal{F}_{w}^{\alpha,d}(f) \mathcal{F}_{w}^{\alpha,d}(g)\|_{r_{1,\nu_{\alpha,d}}}\\
&\leq&\left( \frac{r_{1}^{\frac{1}{r_{1}}}   }{r^{\frac{1}{r}}}\right)^{\alpha + \frac{d}{2}+1}\|\mathcal{F}_{w}^{\alpha,d}(f)\|_{p_{1,\nu_{\alpha,d}}}\|\mathcal{F}_{w}^{\alpha,d}(g)\|_{q_{1,\nu_{\alpha,d}}}\\
&\leq&\left(\frac{ r^{\frac{1}{r}}p^{ \frac{1}{p} }q^{ \frac{1}{q}} }{ r_{1}^{\frac{1}{r}_{1}}p_{1}^{\frac{1}{p_{1}}} q_{1}^{\frac{1}{q_{1}}} }\right)^{\alpha + \frac{d}{2}+1} \|f\|_{p,\nu_{\alpha,d}}  \|g\|_{q,\nu_{\alpha,d}}.
\end{eqnarray*}
\end{proof}


\begin{thebibliography}{99}
\bibitem{B.a} Babenko K.I. An inequality in the theory of Fourier integrals. Izv. Akad. Nauk SSSR, Ser. Mat. 1961;25:531-542. English translation Amer. Math. Soc. Transl. 44:115-128.
      \bibitem {B.K} Beckner W. Inequalities in Fourier analysis. Ann. Math. 1975;102:159-182.
       \bibitem {F.A} Fitouhi A. In\'{e}galit\'{e} de Babenko et In\'{e}galit\'{e} logarithmique de Sobolev pour l'op\'{e}rateur de Bessel[Babenko inequality and  logarithmic Sobolev inequality for the Bessel  operator].
  \bibitem{K.T}  Trim\`{e}che K, Thèse d'État, 1980, Faculté des Sciences de Tunis.
     \bibitem {B.M}  Brelot M, Equation de Weinstein et potentiels de Marcel Riesz, Semin. Theor. Potent., Paris, No. 3, Lect. Notes Math. 3
(1978), 18-38.
           \bibitem{Z.B.NA} Ben Salem N, Ben Nahia Z. Spherical harmonics and applications associated with the Weinstein operator. Proceedings de la conf\`{e}rence Internationale de la Th\'{e}orie de potentiel, kouty ( R\'{e}publique Tch\`{e}que). I.C.P.T;1994.
       \bibitem{N.B.T} Ben Mohamed H, Bettaibi N, Jah Sidi H. Sobolev Type spaces Associated with the Weinstein operator. Int. Journal of Math. Analysis, Vol. 5, 2011, no, 28, 1353-1373.
     \bibitem{F.B} Bouzeffour F. On the norm of the $ L^{p}$-Dunkl transform. Applicable Analysis; 2015; 94(4):761-779.
\bibitem{M.S.M} Mejjaoli H; Salhi M. Uncertainty principles for the Weinstein transform. Czechoslovak Math. J. 61(136) (2011), no. 4, 941-974.
\bibitem{A.S}Saoudi A, A Variation of $L^{p}$ Uncertainty Principles in Weinstein Setting, Indian Journal of Pure and Applied Mathematics volume 51, pages1697–1712 (2020).
    \bibitem{A.S.I}Saoudi A, Ali Kallel I, $L^{2}$-Uncertainty principles for the Weinstein-Multiplier operators, International Journal of Analysis and Applications.
Volume 17, Number 1 (2019), 64-75.
\bibitem{M.K} Mehrez K, Paley–Wiener theorem for the Weinstein transform and applications, Integral Transforms and Special Functions
      \bibitem{S.G}Sz\"{e}go G. Orthogonal polynomials. Vol. 23. Providence(RI): American Mathematical\\ Society;1978.
  \bibitem {R.O}R\"{o}sler M. Generalized Hermite polynomials and the heat equation for Dunkl operators. Comm. Math. Phys.1998;192:519-542.

\end{thebibliography}
\end{document}